\def\author#1{\gdef\autrun{\def\and{\unskip, }#1}\gdef\@author{#1}}
\def\subjclass#1{{\renewcommand{\thefootnote}{}%
\footnote{\emph{Mathematics Subject Classification (2010):} #1}}}
\def\keywords#1{\par\medskip
\noindent\textbf{Keywords.} #1}
\newtheorem{thm}{Theorem}[section]
\newtheorem*{theorem-non}{Theorem}
\newtheorem{rem}[thm]{Remark}
\newtheorem{lem}[thm]{Lemma}
\newtheorem{pro}[thm]{Proposition}
\newtheorem{cor}[thm]{Corollary}
\newtheorem{nota}[thm]{Notation}
\newcommand{\coverpage}[3]{\thispagestyle{empty}
    \addtocounter{page}{-1}
\null\vspace*{-1cm} \hfill\includegraphics[scale=0.2]{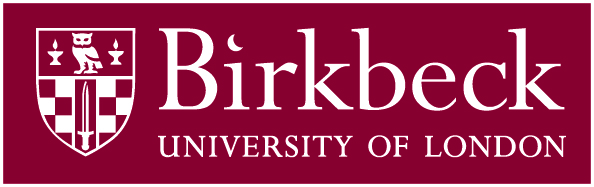} \vskip 2in
\begin{center} \begin{minipage}{0.7\textwidth}\begin{center}\Huge\bf{#1}\end{center} \end{minipage}\end{center}  \vfill
\begin{center} {\large By}\bigskip\\ {\large #2}\\ \end{center} \vfill 
 \framebox{\begin{minipage}{\textwidth}
Birkbeck Pure Mathematics Preprint Series\hfill
Preprint Number #3 \\ \\
\null\hfill www.bbk.ac.uk/ems/research/pure/preprints 
\end{minipage}}
\newpage}
\begin{document}

\coverpage{On a conjecture of Street and Whitehead on locally maximal product-free sets}{Chimere S. Anabanti and Sarah B. Hart}{12}

\title{On a conjecture of Street and Whitehead on locally maximal product-free sets\subjclass{Primary 20D60; Secondary 20P05}}
\author{Chimere S. Anabanti\thanks{The first author is supported by a Birkbeck PhD Scholarship}\\ c.anabanti@mail.bbk.ac.uk \and Sarah B. Hart\\ s.hart@bbk.ac.uk}

\date{May 6, 2015}

\maketitle

\begin{abstract}
\noindent Let $S$ be a non-empty subset of a group $G$. We say $S$ is product-free if $S\cap SS=\varnothing$, and $S$ is locally maximal if whenever $T$ is product-free and $S\subseteq T$, then $S=T$. Finally $S$ fills $G$ if $G^*\subseteq  S \sqcup SS$ (where $G^*$ is the set of all non-identity elements of $G$), and $G$ is a filled group if every locally maximal product-free set in $G$ fills $G$. Street and Whitehead \cite{SW1974} investigated filled groups and gave a classification of filled abelian groups. In this paper, we obtain some results about filled groups in the non-abelian case, including a classification of filled groups of odd order. Street and Whitehead conjectured that the finite dihedral group of order $2n$ is not filled when $n=6k+1$ ($k\geq 1$). We disprove this conjecture on dihedral groups, and in doing so obtain a classification of locally maximal product-free sets of sizes 3 and 4 in dihedral groups, continuing earlier work in \cite{AH2015} and \cite{GH2009}. 

\keywords{Locally maximal, product-free, sum-free, nonabelian}
\end{abstract}

\section{Introduction}
A non-empty subset $S$ of a finite group $G$ is called product-free if $xy=z$ does not hold for any $x,y,z \in S$. Equivalently, writing $SS$ for $\{xy: x, y \in S\}$, we have $S \cap SS = \emptyset$. Product-free sets were originally studied in abelian groups, and therefore they are often referred to in the literature as sum-free (or sumfree) sets. If $S$ is product-free in $G$, and not properly contained in any other product-free subset of $G$, then we call $S$ a locally maximal product-free set (see \cite{AH2015}, \cite{GH2009} and \cite{SW1974}). On the other hand, a product-free set $S$ is called maximal if no product-free set in $G$ has size bigger than $|S|$. In the latter direction, see \cite{DY1969}, \cite{GR2005} and \cite{RS1970}. There has been a good deal of work on maximal product-free sets in abelian groups; for example Green and Ruzsa in \cite{GR2005} were able to determine, for any abelian group $G$, the
cardinality of the maximal product-free sets of $G$. Gowers \cite[Theorem 3.3]{gowers}  proved that if the smallest nontrivial
representation of $G$ is of dimension $k$ then $G$ has no product-free sets of size greater than
$k^{−1/3}n$. Much less is known about sizes of locally maximal product-free sets, in particular the minimal size of a locally maximal product-free set. \\

Since every product-free set is contained in a locally maximal product-free set, we can gain information about product-free sets in a group by studying its locally maximal product-free sets. In connection with group Ramsey Theory, Street and Whitehead \cite{SW1974} noted that every partition of a group $G$ (or in fact, of $G^{\ast}$) into product-free sets can be embedded into a covering by locally maximal product-free sets, and hence to find such partitions, it is useful to understand locally maximal product-free sets. They remarked that many examples of these sets have the additional property that $G^{\ast} \subseteq S \cup SS$, and with that in mind gave the following definition. A subset $S$ of a group $G$ is said to {\em fill} $G$ if $G^*\subseteq  S \sqcup SS$. The group $G$ is called a {\em filled group} if every locally maximal product-free set in $G$ fills $G$. Street and Whitehead in \cite{SW1974} and \cite{SW1974A} classified the abelian filled groups and conjectured that the dihedral group of order $2n$ is not filled when $n=6k+1$ for $k\geq 1$. One consequence of our results in this paper is that this conjecture is false.\\

\noindent This paper is aimed at throwing more light on locally maximal product-free sets (LMPFS for short) and filled groups in the non-abelian case. In Section 2 we look at filled groups. We show (Theorem \ref{soluble}) that all non-abelian finite filled groups have even order, and that all finite nilpotent filled groups of even order are 2-groups.  Using GAP \cite{gap} we have seen that for groups of order up to 31 the only examples of non-abelian filled groups are 2-groups or dihedral (see Table~\ref{table1}). Therefore the dihedral case is of interest.  In Section $3$, we study LMPFS in finite dihedral groups and  classify all LMPFS of sizes $3$ and $4$ in dihedral groups. (Groups containing a locally maximal product-free set of size 1 or 2 were classified in \cite{GH2009}.) In Section 4 we look at filled dihedral groups, give a counterexample to the conjecture of Street and Whitehead and obtain some restrictions on the possible orders of filled dihedral groups. \\

In the rest of this section we establish the notation we will need and gather together some useful results. All groups in this paper are finite.
Given a positive integer $n$, we write $C_n=\left\langle x|~x^n=1\right\rangle$ for the cyclic group of order $n$ and $D_{2n}=\langle x,y|~x^{n}=y^2=1, xy=yx^{-1} \rangle$ for the dihedral group of order $2n$ (where $n>1$). In $D_{2n}$ the elements of $\langle x\rangle$ are called rotations and the elements of $\langle x\rangle y$ are called reflections. For any subset $S$ of $D_{2n}$, we write Rot($S$) for $S \cap \langle x \rangle$, the set of rotations of $S$, and Ref($S$) for $S \cap \langle x \rangle y$, the set of reflections of $S$.  Let $S$ and $V$ be subsets of a finite group $G$. We define $SV:=\{sv|~s\in S,v\in V\}$, $S^{-1}:=\{s^{-1}|s\in S\}$, $T(S):=S \cup SS \cup SS^{-1} \cup S^{-1}S$ and $\sqrt{S}:=\{x \in G:x^2 \in S\}$. The following  results will be used repeatedly.

\begin{lem}[Lemma 3.1 of \cite{GH2009}]\label{GH2009L}
Let $S$ be a product-free set in a finite group $G$. Then $S$ is locally maximal if and only if $G=T(S) \cup \sqrt{S}$.
\end{lem}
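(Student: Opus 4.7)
The plan is to prove both directions by contrapositive, in each case arguing from a single witnessing element $g$.

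For the forward direction, I would assume $G \neq T(S) \cup \sqrt{S}$ and exhibit a strict product-free extension of $S$ that contradicts local maximality. Concretely, I would pick $g \in G \setminus (T(S) \cup \sqrt{S})$ and claim that $S \cup \{g\}$ is product-free. Since $(S \cup \{g\})(S \cup \{g\}) = SS \cup Sg \cup gS \cup \{g^2\}$, I would systematically rule out each of the eight possible overlaps with $S \cup \{g\}$: product-freeness of $S$ handles $S \cap SS$; $g \notin S^{-1}S$ and $g \notin SS^{-1}$ rule out the overlaps of $S$ with $Sg$ and with $gS$ respectively (rewriting $s = s'g$ as $g = (s')^{-1}s$, etc.); $g \notin \sqrt{S}$ rules out $g^2 \in S$; $g \notin SS$ is direct. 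The remaining overlaps of $\{g\}$ with $Sg$, $gS$, and $\{g^2\}$ reduce to excluding $1 \in S$ (forced by product-freeness, since $1 \in S$ would give $1 \cdot 1 \in S \cap SS$) and $g = 1$ (forced because $1 \in SS^{-1} \subseteq T(S)$, yet $g \notin T(S)$). Since $g \notin S$ as well, $S \cup \{g\}$ strictly contains $S$, contradicting local maximality.

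For the reverse direction, I would assume $G = T(S) \cup \sqrt{S}$ and let $T$ be any product-free set with $S \subsetneq T$; picking $g \in T \setminus S$ I would derive $T \cap TT \neq \emptyset$. The hypothesis places $g$ in one of $SS$, $SS^{-1}$, $S^{-1}S$, or $\sqrt{S}$. In the case $g \in SS$, one has $g \in T \cap TT$ directly. In the case $g = s_1 s_2^{-1}$ with $s_1, s_2 \in S$, rearrange to $s_1 = g s_2 \in TT$, so $s_1 \in T \cap TT$; the case $g \in S^{-1}S$ is symmetric. Finally, if $g^2 \in S \subseteq T$, then $g^2 \in T \cap TT$. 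In every case $T$ fails to be product-free, forcing $T = S$ and hence local maximality.

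I do not expect any real obstacle; the argument is essentially careful bookkeeping on the expansion of $(S \cup \{g\})(S \cup \{g\})$. The one subtlety worth flagging is that the identity element is automatically in $T(S)$ via the term $SS^{-1}$, which silently handles the degenerate case $g = 1$ in the forward direction and explains why no separate hypothesis on $1 \in G$ appears in the statement.
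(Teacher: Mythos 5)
Your proof is correct. The paper itself gives no proof of this lemma---it is quoted verbatim as Lemma 3.1 of \cite{GH2009}---and your argument is the standard one for this equivalence: adjoining a witness $g \notin T(S)\cup\sqrt{S}$ to produce a strictly larger product-free set in one direction (with the degenerate cases $1\in S$ and $g=1$ correctly dispatched via product-freeness and $1\in SS^{-1}\subseteq T(S)$), and locating each $g\in T\setminus S$ inside $SS$, $SS^{-1}$, $S^{-1}S$ or $\sqrt{S}$ to contradict product-freeness of $T$ in the other.
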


The following result is well-known but we include a short proof for the reader's convenience. 
\begin{lem} \label{lemma1.2} Let $H$ be a subgroup of a group $G$. Any non-trivial coset of $H$ is product-free in $G$. Further, if $H$ is normal and $Q$ is product-free in $G/H$ then the set $S = \{g \in G: gH \in Q\}$ is product-free in $G$. \end{lem}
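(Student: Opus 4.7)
The plan is to handle both assertions by direct contradiction, since each reduces to a routine manipulation of cosets.

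For the first assertion, suppose $gH$ is a non-trivial coset (so $g \notin H$) and assume for contradiction that $gH$ is not product-free. Then there exist $h_1, h_2, h_3 \in H$ with
\[
(gh_1)(gh_2) = gh_3.
\]
Rearranging yields $h_1 g h_2 = h_3$, hence $g = h_1^{-1} h_3 h_2^{-1} \in H$, contradicting $g \notin H$. Note this argument does not require $H$ to be normal.

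For the second assertion, assume $H \trianglelefteq G$ and $Q$ is product-free in $G/H$, and set $S = \{g \in G : gH \in Q\}$. If $S$ fails to be product-free, then there exist $s_1, s_2, s_3 \in S$ with $s_1 s_2 = s_3$. Passing to the quotient gives $(s_1 H)(s_2 H) = s_3 H$ in $G/H$, with $s_1 H, s_2 H, s_3 H \in Q$ by definition of $S$. This contradicts the product-freeness of $Q$ in $G/H$.

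There is no real obstacle here; the only minor point worth checking is that the first part works without normality (the direct product $(gh_1)(gh_2)$ need not lie in any single coset of $H$ when $H$ is non-normal, but the argument only compares it to the element $gh_3$, so this causes no trouble). I would write the two paragraphs above essentially verbatim as the proof.
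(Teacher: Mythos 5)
Your proof is correct and follows essentially the same route as the paper: the same rearrangement $g = h_1^{-1}h_3h_2^{-1}$ for the first part (the paper uses right cosets $Hg$, but this is immaterial), and the same passage to the quotient for the second (the paper phrases it by noting $SS = \{a : aH \in QQ\}$, while you argue by contradiction from a single product, which is equivalent). No gaps.
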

\begin{proof}
For the first statement, if for some $h_1, h_2, h_3 \in H$ and $g \in G$ we have $(h_1g)(h_2g) = (h_3g)$, then $g = h_1^{-1}h_3h_2^{-1} \in H$. Therefore if $g \notin H$ we have $(Hg)(Hg) \cap Hg = \emptyset$, so $Hg$ is product-free. Now suppose $H$ is normal with $Q$ and $S$ as defined in the statement of the lemma. Then $SS = \{a \in G: aH \in QQ\}$. The fact that $S$ is product-free now follows immediately from the fact that $Q$ is product-free. 
\end{proof}

The following is a straightforward consequence of the definitions. 

\begin{pro}\label{AH2015A_Pro}
Each product-free set of size $\frac{|G|}{2}$ in a finite group $G$ is the non-trivial coset of a subgroup of index 2. Furthermore such sets are locally maximal and fill $G$. 
\end{pro}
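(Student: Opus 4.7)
The plan is to prove the structural claim first, that $S$ is the non-trivial coset of an index-2 subgroup, and then read off local maximality and the filling property.

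Since $S$ is product-free, $1 \notin S$ (otherwise $1 \cdot s = s$ would put $s$ in $S \cap SS$), so $SS \subseteq G \setminus S$. For any fixed $s \in S$, the translate $sS$ lies inside $SS$ and has size $|S|$; since $|G \setminus S| = |S|$, a cardinality count collapses the containments to equalities: $sS = SS = G \setminus S$ for every $s \in S$, and in particular $G = S \sqcup SS$. Using $1 \in SS = sS$ we then get $s \cdot x = 1$ for some $x \in S$, so $s^{-1} \in S$, hence $S = S^{-1}$.

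The main obstacle is showing that $SS$ is a subgroup. Rather than verifying closure directly, I would identify $SS$ with the left stabilizer $K = \{g \in G : gS = S\}$. For any $a = s_1 s_2 \in SS$, a short manipulation using $s_2 S = SS$ together with $SS = s_1^{-1}S$ (which holds because $s_1^{-1} \in S$) gives $aS = s_1 \cdot SS = s_1 \cdot s_1^{-1}S = S$; thus $SS \subseteq K$. Now $K$ is a subgroup with $|K| \geq |SS| = |G|/2$, and $K \neq G$ (else $gS = S$ for every $g$ would force $S = G$), so Lagrange forces $|K| = |G|/2$ and hence $K = SS$. Therefore $SS$ is a subgroup of index $2$ in $G$, and $S = G \setminus SS$ is its non-trivial coset.

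The two remaining conclusions are immediate. As $G = S \sqcup SS$, we have $G^* \subseteq S \sqcup SS$, so $S$ fills $G$. For local maximality, $T(S) \supseteq S \cup SS = G$, so $G = T(S) \cup \sqrt{S}$ and Lemma \ref{GH2009L} applies; equivalently, any $g \in G \setminus S$ has the form $s_1 s_2$ with $s_1, s_2 \in S$, so adjoining $g$ to $S$ would violate product-freeness.
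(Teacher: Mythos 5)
Your argument is correct and complete. The paper itself offers no proof of Proposition \ref{AH2015A_Pro}, merely labelling it a straightforward consequence of the definitions, so there is nothing to compare against; your write-up supplies the missing details accurately. The counting step ($sS \subseteq SS \subseteq G \setminus S$ with $|sS| = |G \setminus S|$ forcing $sS = SS = G \setminus S$ for every $s \in S$, hence $1 \in SS$ and $S = S^{-1}$) is the standard key observation, and your device of passing through the left stabilizer $K = \{g \in G : gS = S\}$ is a clean way to avoid a direct closure check on $SS$: it gives you a genuine subgroup for free, and Lagrange then pins down $K = SS$. An equally common alternative is to verify closure of $H = G \setminus S$ directly (from $sH = S$ and $SH = S$ one gets $s(h_1h_2) \in S = sH$, so $h_1h_2 \in H$), but your route is no longer and arguably tidier. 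The deductions of filling and local maximality from $G = S \sqcup SS$ are immediate, exactly as you say, and your closing remark correctly gives both the Lemma \ref{GH2009L} justification and the elementary one.
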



\section{Filled groups}\label{sec3}
Street and Whitehead in \cite{SW1974} and \cite{SW1974A} investigated locally maximal product-free sets properties in some groups. They proved the following results.

\begin{lem}\cite[Lemma 1]{SW1974} \label{swl1} Let $G$ be a finite group and $N$ a normal subgroup of $G$. If $Q$ is a locally maximal product-free set in $G/N$ that does not fill $G/N$, then the set $S$ given by $S = \{g: gN \in Q\}$ is a locally maximal product-free in $G$ that does not fill $G$. That is, if $G$ is filled then $G/N$ is filled. \end{lem}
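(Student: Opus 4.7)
The plan is to verify each of the three assertions about $S$: it is product-free, it does not fill $G$, and it is locally maximal.

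For product-freeness, this is already covered by Lemma \ref{lemma1.2} and requires no further work.

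For the failure of $S$ to fill $G$, I would exploit the hypothesis directly: since $Q$ does not fill $G/N$, choose a coset $gN \in (G/N)^*$ that lies outside $Q \sqcup QQ$. Every element $h \in gN$ is non-identity (because $gN \neq N$), satisfies $hN = gN \notin Q$, and therefore lies outside $S$. To handle $SS$, I would first record the structural fact that $S$ is a union of cosets of $N$ (it is literally the preimage of $Q$ under the quotient map), and conclude that $SS = \{a \in G : aN \in QQ\}$: one inclusion is immediate from the quotient map, and the other follows by picking preimages $s_1, s_2 \in S$ of the two factors and then absorbing the residual element of $N$ into $s_2$, using that $s_2 n$ still lies in $S$. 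Hence $h \notin SS$, witnessing that $S$ fails to fill $G$.

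For local maximality I would apply Lemma \ref{GH2009L} and show $G = T(S) \cup \sqrt{S}$. Given $g \in G$, the local maximality of $Q$ in $G/N$ gives $gN \in T(Q) \cup \sqrt{Q}$. If $(gN)^2 = g^2N \in Q$, then $g^2 \in S$, so $g \in \sqrt{S}$. Otherwise $gN$ lies in one of $Q$, $QQ$, $QQ^{-1}$, or $Q^{-1}Q$, and in each case I want $g \in T(S)$. The case $gN \in Q$ gives $g \in S$ immediately; the case $gN \in QQ$ was handled above. For $gN \in QQ^{-1}$, the same strategy works: lift to $s_1, s_2 \in S$ with $s_1 s_2^{-1} N = gN$, so $g = s_1 s_2^{-1} n$ for some $n \in N$; normality of $N$ lets me replace $s_2$ by $s_2' = n^{-1} s_2$, which still lies in $S$ since $s_2' N = s_2 N \in Q$, and then $g = s_1 (s_2')^{-1} \in SS^{-1}$. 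The $Q^{-1}Q$ case is symmetric.

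The main obstacle is the bookkeeping in the last step: ensuring that the identifications $SS = \{a : aN \in QQ\}$, $SS^{-1} = \{a : aN \in QQ^{-1}\}$, and $S^{-1}S = \{a : aN \in Q^{-1}Q\}$ really do hold. These are not automatic from the definitions; they depend on two facts working together, namely that $S$ is saturated under $N$-cosets and that $N$ is normal. Once those identifications are in place, the entire argument reduces to a direct transfer of each coordinate of $T(Q) \cup \sqrt{Q}$ to the corresponding coordinate of $T(S) \cup \sqrt{S}$.
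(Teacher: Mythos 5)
Your proof is correct and complete: product-freeness via Lemma \ref{lemma1.2}, the identities $SS=\{a: aN\in QQ\}$, $SS^{-1}=\{a:aN\in QQ^{-1}\}$, $S^{-1}S=\{a:aN\in Q^{-1}Q\}$ (where you rightly flag that normality is needed to absorb the residual element of $N$ in the inverse cases), and the transfer of $G/N=T(Q)\cup\sqrt{Q}$ to $G=T(S)\cup\sqrt{S}$ via Lemma \ref{GH2009L} all check out. Note that the paper itself gives no proof of this lemma --- it is quoted as Lemma 1 of \cite{SW1974} --- so there is no in-paper argument to compare against; your argument is the standard one and fills that gap correctly.
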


\begin{thm}\cite[Theorem 2]{SW1974} A finite abelian group is filled if and only if it is $C_3, C_5$ or an elementary abelian 2-group.
\end{thm}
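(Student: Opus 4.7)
The plan is to prove each direction separately. For the easy direction, I would show that every elementary abelian $2$-group $V = (C_2)^n$ is filled by observing that in $V$ every element is self-inverse, so $S^{-1}=S$ and $T(S) = S \cup SS$ for any subset $S$; moreover every square equals the identity, and since a product-free set cannot contain the identity, $\sqrt{S} = \emptyset$. Lemma~\ref{GH2009L} then reads: $S$ is locally maximal if and only if $V = T(S) \cup \sqrt{S} = S \cup SS$, which is exactly the condition that $S$ fills $V$. For $C_3$ and $C_5$ the check is direct: one enumerates the few product-free sets and verifies that each locally maximal one fills the group.

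For the converse, suppose $G$ is abelian and filled. By Lemma~\ref{swl1} every quotient of $G$ is also filled, so by the structure theorem for finite abelian groups many small cyclic and two-generator quotients are forced to be filled whenever they appear as quotients of $G$. The plan is to identify a short list of small abelian groups that are \emph{not} filled, collectively ruling out every $G$ outside the claimed classification. A natural list is:
\begin{center}
$C_p$ for every prime $p \geq 7$; \quad $C_4$, $C_6$, $C_9$, $C_{10}$, $C_{15}$, $C_{25}$, $(C_3)^2$, $(C_5)^2$.
\end{center}
A case analysis on $G$ then shows that if $G \not\cong C_3$, $C_5$, or $(C_2)^n$, at least one of these groups arises as a quotient of $G$: if some prime $p \geq 7$ divides $|G|$, project onto $C_p$; if the Sylow $3$-subgroup has order $\geq 9$ then either $C_9$ or $(C_3)^2$ is a quotient (similarly for Sylow $5$); if both $3$ and $5$ divide $|G|$ then $C_{15}$ is a quotient; if the Sylow $2$-subgroup is non-elementary then $C_4$ is a quotient; and if $|G|$ is even with a nontrivial odd part then $C_6$ or $C_{10}$ is a quotient.

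For each group in the list one must exhibit an explicit LMPFS that fails to fill, verifying local maximality via Lemma~\ref{GH2009L} by computing $S$, $SS$, $S^{-1}S$, $SS^{-1}$, and $\sqrt{S}$. Representative examples: $\{x\}$ in $C_4$; $\{x,x^3,x^8\}$ in $C_9$; $\{(1,0),(0,1),(2,2)\}$ in $(C_3)^2$. The main obstacle is producing, uniformly across all primes $p \geq 7$, a short product-free set in $C_p$ whose $T$-closure together with $\sqrt{\cdot}$ is the whole group (so the set is locally maximal) but whose union with $SS$ still misses an element (so the set fails to fill). Each individual verification is routine given Lemma~\ref{GH2009L}, but producing this infinite family of examples together with the case-splits above is the laborious part of the argument.
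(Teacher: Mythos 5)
First, note that the paper does not prove this statement at all: it is quoted as Theorem 2 of Street and Whitehead \cite{SW1974}, so there is no in-paper proof to compare yours against. Judged on its own terms, your outline follows the natural (and essentially the only reasonable) strategy --- the forward direction via Lemma \ref{GH2009L}, and the converse via Lemma \ref{swl1} plus a list of small non-filled quotients --- and your forward direction for elementary abelian $2$-groups is clean and correct. But there are two genuine problems with the converse.

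The serious gap is the one you flag yourself: the infinite family $C_p$, $p\geq 7$. This is not a routine afterthought --- it is the mathematical heart of the "only if" direction, and without an explicit product-free set in $C_p$ that is locally maximal (verified via Lemma \ref{GH2009L}) yet fails to fill, the theorem is simply not proved for any group whose order has a prime factor $\geq 7$. Naive candidates do not work uniformly: for instance in $C_p=\langle x\rangle$ with $p=2m+1$, the set $\{x^m,x^{m+1}\}$ is product-free and fails to fill, but $T(S)\cup\sqrt{S}$ has at most $7$ elements, so it is not locally maximal once $p>7$; one genuinely has to design an interval-type set whose size grows with $p$ and check both conditions. A proof that defers exactly this construction has deferred the theorem. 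Separately, one of the witnesses you do give is wrong: $\{x\}$ is \emph{not} locally maximal in $C_4$, since it is properly contained in the product-free set $\{x,x^3\}$ (equivalently, $T(\{x\})\cup\sqrt{\{x\}}=\{1,x,x^2\}\neq C_4$); the correct witness is $\{x^2\}$, which is locally maximal because $\sqrt{\{x^2\}}=\{x,x^3\}$, and which fails to fill. Finally, you could shorten your case list considerably by borrowing the paper's own machinery: Lemma \ref{co3} disposes of every abelian group of order divisible by $3$ other than $C_3$ (any such group has a subgroup of index $3$), and Lemma \ref{order5} disposes of $C_{10}$, $C_{15}$ and $(C_5)^2$, leaving only $C_4$, $C_{25}$ and the $C_p$ family to handle by hand.
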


They also observed the following. 

\begin{lem}\label{co3}
If $G$ is a filled group with a normal subgroup of index 3, then $G \cong C_3$.
\end{lem}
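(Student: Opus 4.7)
The plan is to exhibit, for every normal subgroup $N$ of $G$ with $[G:N]=3$ and $N \neq \{1\}$, a locally maximal product-free set of $G$ that fails to fill $G$. Given the hypothesis that $G$ is filled, this will force $N = \{1\}$, so $G \cong G/N \cong C_3$. Concretely, I would fix $g \in G \setminus N$ and take $S := Ng$.

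The opening and closing steps of the argument are both short. That $S$ is product-free is either an instance of Lemma \ref{lemma1.2} (nontrivial cosets are product-free) or, directly, a consequence of $SS = (Ng)(Ng) = Ng^2 \neq Ng$, using that $N$ is normal and $g \notin N$. At the other end, once local maximality of $S$ is in hand, the conclusion is immediate: $S \cup SS = Ng \cup Ng^2 = G \setminus N$, so $S$ fills $G$ exactly when $N = \{1\}$; since $G$ is filled, we obtain $G \cong C_3$.

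The one place requiring any care is the local maximality of $S$, and I expect this to be the main (though still brief) obstacle. The cleanest route is to show that $S$ is in fact a maximal product-free set of $G$. Suppose $T$ is product-free with $T \supsetneq S$; since $G = N \sqcup Ng \sqcup Ng^2$, the set $T$ must contain either an element of $Ng^2$ or an element of $N$. In the first case, an element $u \in T \cap Ng^2$ lies in $Ng^2 = SS \subseteq TT$ as well as in $T$, contradicting $T \cap TT = \emptyset$. In the second case, an element $h \in T \cap N$ together with $g \in S \subseteq T$ yields $hg \in TT$; but $hg \in Ng = S \subseteq T$, so once again $T \cap TT \neq \emptyset$, a contradiction. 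This disposes of every way of enlarging $S$, finishing the argument.
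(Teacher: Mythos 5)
Your proof is correct, and its overall strategy is the same as the paper's: take $S = Ng$ a nontrivial coset, show it is a locally maximal product-free set, and observe that $S \sqcup SS = G \setminus N$ forces $N = \{1\}$ when $G$ is filled. The one point where you diverge is the verification of local maximality. The paper gets this in one line from Lemma \ref{GH2009L}: since $SS^{-1} = (Ng)(Ng)^{-1} = N$, one has $S \cup SS \cup SS^{-1} = Ng \cup Ng^2 \cup N = G$, so $T(S) = G$ and the criterion $G = T(S) \cup \sqrt{S}$ is trivially met. You instead argue directly from the definition, using the partition $G = N \sqcup Ng \sqcup Ng^2$ to show that adjoining any element of $N$ or of $Ng^2$ to $S$ destroys product-freeness (an element of $Ng^2$ already lies in $SS$; an element $h$ of $N$ gives $hg \in S$ with $h, g$ both in the enlarged set). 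Your route is self-contained and slightly more elementary, at the cost of a short case analysis; the paper's route is shorter given that Lemma \ref{GH2009L} is already available and is used throughout. Both arguments are complete and correct.
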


\begin{proof} Let $N$ be a normal subgroup of index 3 and $S$ be a nontrivial coset. Then $S$ is product-free by Lemma \ref{lemma1.2}. Moreover $S \cup SS \cup SS^{-1} = G$. Therefore $S$ is a locally maximal product-free set by Lemma~\ref{GH2009L} and so $S$ must fill $G$, which implies that $G^{\ast} \subseteq S \cup SS$. But $S \cup SS = G  -  N$. Therefore $N = \{1\}$ and $G$ is cyclic of order 3. \end{proof}

Street and Whitehead also observed that Lemma \ref{swl1} implies that the quotients of any filled non-abelian group $G$ must themselves be filled. In particular, the quotient of $G$ by its derived group $G'$ must be either an elementary abelian 2-group or cyclic of order 5 (it cannot be cyclic of order 3 by Lemma \ref{co3}). These conditions are not sufficient. The counterexamples given in \cite{SW1974} are $D_{14}$ (which in fact {\em is} a filled group, as we shall show), the quaternion group of order 8 and the alternating group of degree 5. \\

Our main aim in this section is to classify filled groups of odd order. We begin with $p$-groups of odd order.

\begin{pro}\label{4.12}
Suppose $G$ is a finite $p$-group, where $p$ is an odd prime. Then $G$ is filled if and only if $G$ is either $C_3$ or $C_5$.
\end{pro}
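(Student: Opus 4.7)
The "if" direction is immediate from the Street--Whitehead classification of abelian filled groups, since $C_3$ and $C_5$ are listed there. For the "only if" direction, the plan is to show that any filled $p$-group $G$ with $p$ odd has $G/G'$ forced to be $C_p$, and then to promote cyclicity of $G/G'$ to cyclicity of $G$ using the Frattini subgroup.

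First I would invoke Lemma \ref{swl1}: since $G$ is filled, so is its abelianization $G/G'$. Because $G$ is a non-trivial $p$-group with $p$ odd, $G/G'$ is a non-trivial abelian $p$-group that cannot be an elementary abelian $2$-group, so the Street--Whitehead classification of abelian filled groups leaves exactly two options, namely $G/G' \cong C_3$ or $G/G' \cong C_5$. In particular $G/G'$ is cyclic of prime order $p \in \{3,5\}$.

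Next I would upgrade this to the statement that $G$ itself is cyclic. Since $G/G'$ has exponent $p$, every element of $G$ raised to the $p$-th power lies in $G'$, so $G^p \leq G'$. For a $p$-group the Frattini subgroup is $\Phi(G) = G'G^p$, so here $\Phi(G) = G'$ and hence $G/\Phi(G) \cong C_p$. By the Burnside basis theorem, the minimum number of generators of $G$ equals the dimension of $G/\Phi(G)$ as an $\mathbb{F}_p$-vector space, which is $1$. So $G$ is cyclic, and therefore $G' = 1$, giving $G = G/G' \in \{C_3, C_5\}$.

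The main obstacle, in my view, is the last step: passing from "$G/G'$ cyclic of prime order" to "$G$ cyclic." This is the only place the $p$-group hypothesis is used in an essential way beyond ruling out the elementary abelian $2$-group case, and it rests on the standard identity $\Phi(G) = G'G^p$ together with Burnside basis. As a sanity check / alternative for the subcase $p = 3$, one could skip the Frattini argument entirely and simply apply Lemma \ref{co3} to $G'$, which is a normal subgroup of index $3$, to conclude immediately that $G \cong C_3$; this shows the Frattini route is really only doing non-trivial work in the $p = 5$ subcase.
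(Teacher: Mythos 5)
Your proof is correct, but it reaches the conclusion by a different mechanism than the paper. The paper's proof is an induction on $|G| = p^n$ using the \emph{centre}: if $G$ were non-abelian, then $G/Z(G)$ is a strictly smaller filled $p$-group (filled by Lemma \ref{swl1}, smaller because $Z(G) \neq 1$ in a $p$-group), hence $C_3$ or $C_5$ by the inductive hypothesis; but $G/Z(G)$ cyclic forces $G$ abelian, a contradiction, so $G$ is abelian and the Street--Whitehead classification finishes. You instead work with the \emph{derived quotient}: Lemma \ref{swl1} plus the abelian classification pins $G/G'$ down to $C_3$ or $C_5$, and then the identity $\Phi(G) = G'G^p$ together with the Burnside basis theorem upgrades ``$G/G'$ cyclic'' to ``$G$ cyclic.'' Both arguments hinge on the same two external inputs (quotients of filled groups are filled, and the abelian classification), and both ultimately exploit the fact that a $p$-group with a suitable cyclic quotient is abelian; but yours avoids induction entirely at the cost of invoking Frattini/Burnside machinery, while the paper's needs only the elementary fact that $G/Z(G)$ cyclic implies $G$ abelian. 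Your side remark that the $p=3$ case follows at once from Lemma \ref{co3} applied to the index-$3$ subgroup $G'$ is also correct and is closer in spirit to how the paper handles index-$3$ subgroups elsewhere. One small point worth making explicit in your write-up: the claim that $G/G'$ is non-trivial rests on $G' \lneq G$ for a non-trivial finite $p$-group (nilpotency), which you use silently.
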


\begin{proof}
Certainly $C_3$ and $C_5$ are filled. For the reverse implication, let $G$ be a finite $p$-group of order $p^n$. We proceed by induction on $n$. If $G$ is  non-abelian, then the quotient of $G$ by its centre $Z(G)$ is a strictly smaller $p$-group so, inductively, is either $C_3$ or $C_5$ (since $p$ is odd). But it is a basic result that if $G/Z(G)$ is cyclic, then $G$ is abelian, giving a contradiction. Therefore $G$ is abelian, and now the result follows immediately from the classification of filled abelian groups.
\end{proof}

The next theorem, Theorem \ref{soluble}, makes use of an observation in \cite{SW1974}.  Theorem 3 of that paper asserts that if $G$ is a finite nonabelian filled group, then either $G = G'$ or $G/G'$ is an elementary abelian 2-group, or $G/G' \cong C_5$ and $|G|$ is even. The proof given is that since $G/G'$ must be a filled abelian group, it is either trivial, or elementary abelian 2-group, or $C_3$ or $C_5$. Now $C_3$ is impossible by Lemma \ref{co3}. So if $G$ has odd order, we must have that $G/G'$ is cyclic of order 5. A set is then described, based on an element $a$ of $G - G'$, which the authors claim is locally maximal product free but does not fill $G$. But in fact the given set is only locally maximal if $a$ has order 5. The existence of such an element is not guaranteed when $G/G'$ is cyclic of order 5, even if $G$ has odd order. We are grateful to Robert Guralnick for providing us with an example of a group without such an element --- the group is an extension of an extraspecial group of order $5^{11}$ by the Frobenius group of order $55$, such that the fifth power of each element of order 5 in the Frobenius group is a central element of order 5 in the extraspecial group. In this case the derived group has index 5 and contains all elements of order 5. We resolve that issue in the following lemma and theorem by reducing to a situation where we can be certain of the existence of the required element. 

\begin{lem}\label{order5}
Suppose $G$ is a finite group with a normal subgroup $N$ of index five, such that not every element of order five in $G$ is contained in $N$. If $G$ is filled, then $G$ is cyclic of order 5.
\end{lem}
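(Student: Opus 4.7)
The plan is to argue by contradiction: assume $G$ is filled but $G \neq C_5$, and exhibit a locally maximal product-free set $S$ of $G$ with $G^{*} \not\subseteq S \cup SS$, contradicting that $G$ is filled. By hypothesis there is $a \in G \setminus N$ with $a^5 = 1$. Since $aN$ has order exactly $5$ in $G/N \cong C_5$, the subgroup $\langle a\rangle$ meets $N$ only in the identity, so $|\langle a\rangle|=5$ and $G = N \rtimes \langle a \rangle$. Because $G \neq C_5$, $N$ is non-trivial.

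For the construction I would adapt the set proposed by Street and Whitehead in \cite{SW1974}, which is precisely the argument the present lemma is designed to rescue. Since $a$ has order $5$, we have $(a^2)^2 = a^{-1}$ and $(a^3)^2 = a$; hence any $S$ containing $\{a,a^{-1}\}$ automatically has $a^2, a^3 \in \sqrt{S}$, and more generally $(a^3 n)^2 \in aN$ and $(a^2 n)^2 \in a^{-1}N$ for every $n \in N$. The idea is to take $S$ to be $\{a,a^{-1}\}$ together with a carefully chosen subset of $a^2 N \cup a^3 N$ (dictated by the conjugation action of $a$ on $N$) so that: (i) $S$ is product-free; (ii) $T(S)\cup\sqrt{S}=G$, with the cosets $aN$ and $a^{-1}N$ covered largely through $\sqrt{S}$ rather than through $S$ itself; and (iii) some specific element of $N\setminus\{1\}$ lies outside $S \cup SS$.

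Verification would proceed by direct computation in $G = N \rtimes \langle a \rangle$: product-freeness by case analysis on coset representatives using the identity $(a^i m)(a^j n) = a^{i+j}(m^{a^{-j}})\,n$; local maximality by a coset-by-coset check that every $g \in G$ lies in $T(S) \cup \sqrt{S}$; and failure to fill by exhibiting an explicit element of $N\setminus\{1\}$ missed by $S \cup SS$. The critical input for (ii) is that $a$ has order $5$: if $a$ had larger order (for example $25$), then $(a^3)^2 \neq a$ and $\sqrt{S}$ would no longer catch elements of $aN$ and $a^{-1}N$ automatically, so the coverage of these cosets would collapse.

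The main obstacle is choosing the augmenting subset of $a^2 N \cup a^3 N$ so that (i)--(iii) hold simultaneously; this is the delicate point flagged in the discussion preceding the lemma, where the gap in the original Street--Whitehead argument is explained. Once this $S$ is in hand, $G$ admits a locally maximal product-free set that does not fill it, contradicting the filled hypothesis; therefore $G = C_5$.
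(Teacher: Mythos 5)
Your write-up is a plan rather than a proof: the entire content of the lemma is the exhibition of a concrete locally maximal product-free set that fails to fill $G$, and you explicitly defer that construction (``the main obstacle is choosing the augmenting subset of $a^2N \cup a^3N$ so that (i)--(iii) hold simultaneously''). Nothing in the proposal guarantees such a subset exists for an arbitrary normal subgroup $N$, so the contradiction is never actually reached. Moreover, the shape you propose, namely $\{a,a^{-1}\}$ together with elements of both middle cosets, is harder to make product-free than necessary: with $a, a^{-1}\in S$ and $s \in S\cap a^2N$ you get $as \in a^3N$ and $a^{-1}(a^3n) \in a^2N$, so the two chosen subsets interact with each other and with $\{a,a^{-1}\}$ in ways that must all be controlled simultaneously, and it is not clear this can always be done.

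The paper's proof takes a much simpler set that avoids all of this: after disposing of $|N|\le 2$ separately, it puts $S = \{h\} \cup h^2N^{\ast}$ where $h\notin N$ has order $5$ and $N^{\ast}=N\setminus\{1\}$. One then computes directly that $SS = \{h^2\}\cup h^3N^{\ast}\cup h^4N$ (using $|N|\ge 3$ to get $(h^2N^{\ast})^2=h^4N$), so $S$ is product-free and misses all of $N^{\ast}$, hence does not fill $G$; and $SS^{-1}= h^4N^{\ast}\cup hN^{\ast}\cup N$ gives $T(S)=G-\{h^3\}$, with $h^3\in\sqrt{S}$ because $(h^3)^2=h\in S$, so $S$ is locally maximal by Lemma \ref{GH2009L}. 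Note also that the paper uses $\sqrt{S}$ only to pick up the single element $h^3$, not to cover whole cosets as your item (ii) envisages. Your setup ($G=N\rtimes\langle a\rangle$, the role of $\circ(a)=5$ in making $h^3$ a square root of $h$) is correct and is indeed the point being rescued from Street--Whitehead, but without an explicit $S$ and its verification the argument is incomplete. You also do not treat the degenerate cases $|N|=1,2$, which the paper handles before assuming $|N|\ge 3$.
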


\begin{proof} Our argument is based on the construction given in \cite{SW1974}. If $N$ is trivial then $G \cong C_5$ and $G$ is filled. If $|N| = 2$, then $G \cong C_{10}$ and $G$ is not filled. So we may assume $N$ has order at least three.
 Let $h$ be an element of order 5 in $G$ with $h \notin N$. Then let $S = \{h\} \cup h^2N^{\ast}$ (where $N^{\ast}$ is the set of nonidentity elements of $N$). Then $SS = \{h^2\} \cup h^3N^{\ast} \cup h^4N$. (The fact that $(h^2N^{\ast})^2 = h^4N$ follows because $|N| > 2$.) So $S$ is product-free, but does not fill $G$. Now $SS^{-1}  = h^4N^{\ast} \cup hN^{\ast} \cup N$. Thus $T(S) = G  -  \{h^3\}$. Since $h^3 \in \sqrt S$, we can now conclude that $G = T(S) \cup \sqrt S$, which means $S$ is a locally maximal product-free set that does not fill $G$. So $G$ is not filled.
\end{proof}

\begin{thm}\label{soluble}
The only filled groups of odd order are $C_3$ and $C_5$.  
\end{thm}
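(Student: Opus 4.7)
The plan is to argue by strong induction on $|G|$: supposing the theorem holds for every filled group of odd order strictly smaller than $G$, I would derive a contradiction from any filled group $G$ of odd order with $G \notin \{C_3, C_5\}$. If $G$ is abelian the result is immediate from the Street--Whitehead classification of abelian filled groups, so I may assume $G$ is non-abelian. Then $G/G'$ is abelian, filled (by Lemma \ref{swl1}), and of odd order, so it is trivial, $C_3$, or $C_5$. Perfection ($G=G'$) is incompatible with $G$ being a non-trivial odd-order (hence solvable, by Feit--Thompson) group, and Lemma \ref{co3} rules out $G/G' \cong C_3$; thus $G/G' \cong C_5$.

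The next step is to use the inductive hypothesis to pin $G'$ down as a minimal normal subgroup of $G$. For any non-trivial normal subgroup $K$ of $G$ contained in $G'$, the quotient $G/K$ is filled (Lemma \ref{swl1}), of odd order, and of order strictly less than $|G|$; by induction $G/K \in \{1, C_3, C_5\}$. The trivial case would give $K=G$, contradicting $K \le G' < G$; $C_3$ is blocked by Lemma \ref{co3}; hence $G/K \cong C_5$, which forces $|K|=|G'|$ and so $K=G'$. Being a minimal normal subgroup of $G$, $G'$ is characteristically simple, and since $G'$ has odd order, Feit--Thompson implies that $G'$ is elementary abelian of exponent $p$ for some odd prime $p$.

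The proof now splits on $p$. If $p=5$, then $|G|=5\cdot|G'|$ is a power of $5$, and Proposition \ref{4.12} forces $G \cong C_5$, contradicting our assumption on $G$. If $p \neq 5$, then $G'$ contains no element of order $5$; but $|G:G'|=5$, so a Sylow $5$-subgroup of $G$ has order $5$ and its non-trivial elements all lie in $G \setminus G'$. Applying Lemma \ref{order5} with $N=G'$ (noting $|G'| \geq p \geq 3$, so the hypothesis of that lemma is satisfied) then yields $G \cong C_5$, a final contradiction. The main obstacle is the reduction to $G'$ being minimal normal; once that is secured the two-case analysis above closes the argument using only the lemmas already established together with Feit--Thompson.
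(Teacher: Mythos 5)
Your proof is correct, and its overall skeleton is the same as the paper's: induction on $|G|$, reduction to $G/G' \cong C_5$ via Lemma \ref{swl1} and Lemma \ref{co3}, showing $G'$ is a $p$-group, and then invoking Lemma \ref{order5} with $N = G'$. Where you diverge is in how you pin down the structure of $G'$. The paper first shows $G'' = \{1\}$ (by applying the quotient argument to $G/G''$), so that $G'$ is abelian, and then takes a Sylow $p$-subgroup $K$ of $G'$ for a prime $p \neq 5$ dividing $|G|$; since $K$ is characteristic in the abelian normal subgroup $G'$ it is normal in $G$, and the induction forces $G/K \cong C_5$, hence $K = G'$ and $G'$ is a $p$-group with $p \neq 5$. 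You instead observe directly that \emph{every} nontrivial normal $K \leq G'$ satisfies $G/K \cong C_5$ and hence $K = G'$, so $G'$ is a minimal normal subgroup; characteristic simplicity plus Feit--Thompson then gives that $G'$ is elementary abelian of exponent $p$, and you dispose of $p=5$ separately via Proposition \ref{4.12}. Your route is slightly slicker in that it bypasses both the $G''$ step and the Sylow argument, at the cost of a second explicit appeal to Feit--Thompson (the paper only needs solubility once, to get $G' < G$, and to rule out $G''=G'$); both arguments are complete and rest on the same key lemma.
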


\begin{proof}
Let $G$ be a nontrivial group of odd order. We proceed by induction on the order of $G$. Groups of order 3 and 5 are filled, so assume $|G| > 5$, and, inductively, that if $H$ is a filled group of odd order with $|H| < |G|$, then $H$ is isomorphic to either $C_3$ or $C_5$. \\

If $G$ is abelian, then $G$ is not filled. So we may assume that $G$ is nonabelian. Then, because $G$ is soluble, the derived group $G'$ is a proper nontrivial normal subgroup of $G$. Therefore $G/G'$ is a filled group of order less than $|G|$, and   hence is isomorphic to either $C_3$ or $C_5$. However, as we have noted, if $S$ is any non-trivial coset of a normal subgroup of index 3, then $S$ is a locally maximal product-free set that does not fill $G$. Therefore $G/G'$ is cyclic of order 5. If $G''$ is nontrivial, then we can apply the same argument to $G/G''$, which would imply that $G/G''$ is also cyclic of order 5, and thus that $G'' = G'$, contradicting the solubility of $G$. Therefore $G'' = \{1\}$ and $G'$ is abelian.\\

Since $G$ has order greater than 5, Proposition \ref{4.12} implies that $G$ is not a $p$-group. Thus there is at least one prime $p$, with $p \neq 5$, dividing the order of $G$. Any Sylow $p$-subgroup $K$ of $G'$ is also a Sylow $p$-subgroup of $G$. But $G'$ is normal in $G$, and abelian, whence $K$ is normal in $G$. Now $G/K$ is filled, meaning that $G/K$ has order 5, which implies $K = G'$.  Therefore $5$ does not divide the order of $|K|$, which means there are elements of order 5 in $G$ that do not lie in $G'$ (in fact of course all elements of order 5 lie outside of $G'$). Therefore, by Lemma \ref{order5}, $G$ is not filled. The result now follows by induction.
%
%
%
\end{proof}

Groups of even order are of course less amenable to analysis. We have the following step in this direction.

\begin{lem}\label{interesting}
If $G$ is a filled nilpotent group, then $G$ is either a 2-group or isomorphic to $C_3$ or $C_5$.
\end{lem}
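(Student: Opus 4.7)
The plan is to exploit the structure of a finite nilpotent group as an internal direct product of its Sylow subgroups, and then invoke Proposition \ref{4.12} together with Lemmas \ref{swl1}, \ref{co3}, and \ref{order5} to narrow down the possibilities. Write $G = P_1 \times P_2 \times \cdots \times P_k$, where $P_1$ is the (possibly trivial) Sylow $2$-subgroup and $P_2, \ldots, P_k$ are the Sylow $p_i$-subgroups for the distinct odd primes $p_i$ dividing $|G|$. If each $P_i$ with $i \geq 2$ is trivial, then $G$ is a $2$-group and we are done.

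Otherwise, I would first pin down the shape of each nontrivial odd Sylow factor. For each $i \geq 2$, set $K_i = \prod_{j \neq i} P_j$, so that $G/K_i \cong P_i$. By Lemma \ref{swl1} the group $P_i$ is filled, and if it is nontrivial then Proposition \ref{4.12}, applied to the odd $p_i$-group $P_i$, forces $P_i \cong C_3$ or $P_i \cong C_5$.

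Next I would split into two cases. If some $P_i \cong C_3$, then $K_i$ is a normal subgroup of $G$ of index $3$, so Lemma \ref{co3} gives $G \cong C_3$. In the remaining case some $P_j \cong C_5$, with no $C_3$ factor present. If $K_j$ is trivial, then $G \cong C_5$. If $K_j$ is nontrivial, then $K_j$ is a normal subgroup of index $5$ in $G$, and since $|K_j|$ is coprime to $5$, every element of order $5$ in $G$ lies inside $\{1\} \times P_j$ and hence outside $K_j$. Lemma \ref{order5} then forces $G \cong C_5$, contradicting the nontriviality of $K_j$. Combining the cases, $G$ is a $2$-group, $C_3$, or $C_5$.

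The step I expect to carry the most weight is the final subcase, namely $G \cong P \times C_5$ with $P$ a nontrivial $2$-group. Without the freshly proven Lemma \ref{order5} it is not obvious how to produce a locally maximal product-free set in such a group that fails to fill it, which is why that lemma was isolated in the form it was; once Lemma \ref{order5} is in hand, the present proof reduces to a clean case analysis on the Sylow decomposition.
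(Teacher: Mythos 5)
Your proof is correct, and it reaches the same punchline as the paper's --- ruling out an odd factor isomorphic to $C_3$ via Lemma \ref{co3} and one isomorphic to $C_5$ via Lemma \ref{order5} --- but it gets there by a slightly different reduction. The paper treats the odd-order case by citing Theorem \ref{soluble} outright, and in the even-order case it quotients by the Sylow $2$-subgroup $N$ and again invokes Theorem \ref{soluble} to conclude that $G/N$ is $C_3$ or $C_5$. You instead never use Theorem \ref{soluble}: you apply Lemma \ref{swl1} and Proposition \ref{4.12} to each odd Sylow factor $P_i \cong G/K_i$ separately, which pins each one down to $C_3$ or $C_5$ using only the $p$-group classification, and then the complementary factor $K_i$ supplies the normal subgroup of index $3$ or $5$ needed for Lemmas \ref{co3} and \ref{order5}. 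What your route buys is independence from the odd-order classification (so this lemma could be proved before Theorem \ref{soluble}, using only the nilpotent structure); what the paper's route buys is brevity, since Theorem \ref{soluble} is already available at that point and collapses the whole odd part in one line. Your handling of the delicate subcase $G \cong P \times C_5$ with $P$ a nontrivial $2$-group is exactly the intended use of Lemma \ref{order5}: the coprimality of $|K_j|$ and $5$ guarantees the order-$5$ elements lie outside $K_j$, which is the hypothesis that the lemma was isolated to serve.
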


\begin{proof}
Let $G$ be a filled nilpotent group. If $G$ has odd order then $G$ is either $C_3$ or $C_5$ by Theorem~\ref{soluble}. So assume $G$ has even order. Then $G$ is a direct product of $p$-groups (its Sylow subgroups), and its Sylow 2-subgroup $N$ is nontrivial. The quotient $G/N$ (which must be filled) is isomorphic to the direct product of the remaining Sylow subgroups, which is a group of odd order. If $N \neq G$ then $G/N$ is either $C_3$ or $C_5$. We know no filled group can have a normal subgroup of index 3, so $G/N$ must be cyclic of order 5, and clearly all elements of order 5 in $G$ lie outside $N$. Thus, by Lemma \ref{order5}, $G$ is not filled.  Therefore $N = G$. That is, if $G$ is a filled nilpotent group then $G$ is either a 2-group or isomorphic to $C_3$ or $C_5$. 
\end{proof}

In the light of Lemma \ref{interesting} it would be interesting to have a classification of filled 2-groups, as this would enable a full classification of filled nilpotent groups. We will show in Section 4 that  $D_8$ is the only filled nonabelian dihedral 2-group. We can eliminate generalised  quaternion groups from our enquiries now. For a positive integer $n$, with $n>1$, the generalised  quaternion group of order $4n$ is the group $Q_{4n} = \langle a, b: a^{2n} = 1, b^2 = a^n, ba = a^{-1}b\rangle$. We have the following.

\begin{pro}
No generalised  quaternion group is filled.
\end{pro}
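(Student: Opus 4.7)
The plan is to exhibit, for every $n>1$, a locally maximal product-free set $S \subseteq Q_{4n}$ that does not fill $Q_{4n}$. The key structural fact I will exploit is that $a^n$ is the unique involution of $Q_{4n}$ and every element outside $\langle a\rangle$ squares to it: a direct calculation using $ba^i = a^{-i}b$ gives $(a^i b)^2 = a^i\cdot a^{-i}b^2 = b^2 = a^n$ for every $i$. In particular, the $2n$ reflections of $Q_{4n}$ all have $a^n$ as their square.

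Since $(a^n)^2 = 1 \ne a^n$, the singleton $\{a^n\}$ is product-free in $Q_{4n}$. Because $Q_{4n}$ is finite we may extend $\{a^n\}$ to a locally maximal product-free set $S$ still containing $a^n$. The crucial step is to observe that such an $S$ must be contained in $\langle a\rangle$: if some reflection $y$ lay in $S$, then $y^2 = a^n$ would give $a^n \in SS$, while $a^n \in S$, contradicting $S \cap SS = \varnothing$.

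Once $S \subseteq \langle a\rangle$ is established, $SS$ is also contained in $\langle a\rangle$, so $S \cup SS$ sits inside the proper subgroup $\langle a\rangle$ of index $2$. In particular none of the $2n$ reflections of $Q_{4n}$ lies in $S \cup SS$, so $S$ does not fill $Q_{4n}$. Thus $Q_{4n}$ admits a locally maximal product-free set that is not filling, and therefore is not a filled group.

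The argument is uniform in $n$ and avoids any case split on the parity of $n$ or appeal to Lemma \ref{GH2009L} — everything reduces to the definitions once the common-square observation is in hand. The only thing that has to be checked carefully is that we are extending $\{a^n\}$ to an LMPFS of $Q_{4n}$ itself (not of $\langle a\rangle$); this is just the standard remark that every product-free set in a finite group is contained in some locally maximal one, so there is no real obstacle.
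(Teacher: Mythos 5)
Your proof is correct. It rests on the same structural facts as the paper's proof --- $a^n$ is the unique involution of $Q_{4n}$ and every element outside $\langle a\rangle$ squares to it --- but the logical route is genuinely different. The paper starts with a locally maximal product-free set $S$ of the cyclic index-$2$ subgroup $N$ containing the involution $z$, and then argues that $S$ remains locally maximal in the whole group because $G - N \subseteq \sqrt{S}$; this implicitly leans on the characterisation $G = T(S) \cup \sqrt{S}$ of Lemma \ref{GH2009L}. You instead extend $\{a^n\}$ to a locally maximal product-free set of $Q_{4n}$ itself and then show, directly from $S \cap SS = \varnothing$, that $S$ cannot meet $G - \langle a\rangle$: any such element $u$ would give $u^2 = a^n \in S \cap SS$. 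Your version buys two things: it needs nothing beyond the definitions and the standard fact that every product-free set extends to a locally maximal one, and it proves the slightly stronger statement that \emph{every} locally maximal product-free set containing the central involution fails to fill $Q_{4n}$, rather than exhibiting one particular such set. The paper's version, in exchange, gives an explicit description of the offending set as a locally maximal set of the cyclic subgroup. Both are complete; the only cosmetic quibble is your use of the word ``reflection'' for elements of $Q_{4n} \setminus \langle a\rangle$, which the paper reserves for dihedral groups, but your meaning is unambiguous.
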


\begin{proof}
Let $G$ be generalised  quaternion. Then $G$ has a cyclic subgroup $N$ of index 2, and $G$ contains a unique involution $z$. Let $S$ be any locally maximal product-free set of $N$  containing $z$. Then because every element of $G - N$ is a square root of $z$, we have that $S$ is locally maximal product-free in $G$. But $S$ clearly does not fill $G$.
\end{proof}

The only nonabelian filled groups we know of that are not 2-groups are dihedral (see Table \ref{table1} for a complete list of the filled groups of order less than 32).  Therefore it makes sense to study dihedral groups a little more carefully. This is the object of the next section. 

\section{Locally maximal product-free sets in dihedral groups}
\begin{thm}\label{CC1}\label{AH2015A_T}
Let $S$ be a locally maximal product-free set in a finite dihedral group $G$ of order $2n$. Then $|G|\leq |S|^2+|S|$.
\end{thm}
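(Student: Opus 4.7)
The strategy is to exploit the fact that in a dihedral group every reflection has order $2$, which severely restricts $\sqrt{S}$. Write $G=D_{2n}$ and split $S$ as $S = R \cup F$ with $R=\mathrm{Rot}(S)$, $F=\mathrm{Ref}(S)$, and set $r=|R|$, $f=|F|$. By Lemma \ref{GH2009L}, local maximality gives $G=T(S)\cup\sqrt{S}$. Since $S$ is product-free, $1\notin S$, so no reflection lies in $\sqrt{S}$; that is, $\sqrt{S}\subseteq\langle x\rangle$. Consequently every one of the $n$ reflections of $G$ must already appear in $T(S)$.

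Next I would identify the reflection part of $T(S)=S\cup SS\cup SS^{-1}\cup S^{-1}S$. Products of two rotations or of two reflections lie in $\langle x\rangle$, so reflections in $T(S)$ arise only from a rotation times a reflection or vice versa. A short calculation using $yx^k = x^{-k}y$ shows $R^{-1}F = FR$ and $FR^{-1}=RF$, so the set of reflections in $T(S)$ collapses to exactly $F\cup RF\cup FR$. A union bound then gives
\[
n \;\le\; |F\cup RF\cup FR| \;\le\; f+|RF|+|FR| \;\le\; f+2rf \;=\; f(2r+1).
\]

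The final step is pure algebra: doubling yields $2n\le 2f(2r+1)$, while
\[
|S|(|S|+1)-2f(2r+1) \;=\; (r+f)(r+f+1)-(4rf+2f) \;=\; (r-f)^2+(r-f) \;=\; (r-f)(r-f+1),
\]
which is non-negative, being a product of two consecutive integers. Therefore $|G|=2n\le|S|^2+|S|$, as claimed.

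The only substantive observation in the proof is that reflections cannot contribute to $\sqrt{S}$; everything else is bookkeeping. The one subtle point, and the step I would take the most care over, is collapsing the four potential reflection-producing products ($RF$, $FR$, $R^{-1}F$, $FR^{-1}$) down to only $RF$ and $FR$ --- without this collapse the resulting bound would be cubic in $|S|$ rather than the quadratic one required.
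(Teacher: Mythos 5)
Your proof is correct and takes essentially the same route as the paper: both rely on Lemma \ref{GH2009L} plus the observation that reflections cannot lie in $\sqrt{S}$, forcing all $n$ reflections of $G$ into $F \cup RF \cup FR$ and yielding $n \le f(2r+1)$. The only difference is cosmetic: the paper maximizes the resulting quadratic over the number of rotations, whereas you verify $|S|(|S|+1) - 2f(2r+1) = (r-f)(r-f+1) \ge 0$ by direct factorization.
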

\begin{proof}
Suppose $S$ is a locally maximal product-free set of size $m\geq 1$ in a finite dihedral group $G$. Let $A = $Rot($S$) and $B = $Ref($S$). By the relations in the dihedral group, $BA = A^{-1}B$. We also have that $AA^{-1} = A^{-1}A$ and $B^{-1} = B$. Therefore \begin{align}
T(S) &= S \cup SS \cup SS^{-1} \cup S^{-1}S \nonumber\\
&= A \cup B \cup AA \cup AB \cup A^{-1}B \cup BB \cup AA^{-1}. \label{eq1}
\end{align}
Now, since $G = T(S) \cup \sqrt S$, and $\sqrt S$ cannot contain involutions, it must be the case that Ref($G$) is contained in $T(S)$. That is, we must have \begin{equation}
\mathrm{Ref}(G)  = B \cup AB \cup A^{-1}B = B \cup (A \cup A^{-1})B.
\label{eq2}
\end{equation}
If $|A| = k$ and $|S| = m$ we see that $|G| \leq 2(2k+1)(m-k) = 2m + 2(2m-1)k - 4k^2$. This is a quadratic expression in $k$ which attains its maximum value over all $k$ when $k = \frac{2m-1}{4}$, so attains its maximum value over integers $k$ at either $k = \frac{m-1}{2}$ or $k = \frac{m}{2}$. Substituting either value for $k$ into $2(2k+1)(m-k)$ gives $m(m+1)$. We conclude that $|G| \leq  |S|^2 + |S|$.
\end{proof}

\begin{rem}\label{RSS}
It follows from the proof of Theorem \ref{CC1} that if $S$ is a locally maximal product-free set in a finite dihedral group $G$, then Ref($G$)$=$Ref($T(S)\cup \sqrt{S}$)$=$Ref($S\sqcup SS$). In particular, $S$ must contain at least one reflection.
\end{rem}

We also remark that nearly all other known upper bounds for the order of a finite group $G$ containing a locally maximal product-free set $S$ are in terms of $|\langle S\rangle|$ rather than $|S|$. See \cite{GH2009}, for example. Theorem \ref{CC1} is thus a useful concrete bound for dihedral groups. The only other known upper bounds for the order of a finite group $G$, in terms of the size of a locally maximal product-free set $S$ in $G$, are for the case where $S\cap S^{-1} = \emptyset$, in which case $|G| \leq 4|S|^2 + 1$ \cite[Corollary 3.10]{GH2009}, and the case where $G$ is cyclic, in which case it is easy to see from Lemma \ref{GH2009L} that $|G| \leq \frac{1}{2}\langle 3|S|^2 + 5|S| + 2\rangle$.\\ 

From Theorem \ref{AH2015A_T}, it is clear that if $n > 1$, there is no locally maximal product-free set of size $1$ in $D_{2n}$, and that any locally maximal product-free set of size $2$ must appear in $D_4$ or $D_6$. A simple check shows that any LMPFS of size $2$ must be automorphic to $\{x,y\}$ in $D_4$ and $D_6$, where $x$ is an element of order $n$, and $y$ is a reflection. We next look at sets of size 3 and 4. Locally maximal product-free sets of size $3$ have been classified in \cite{AH2015}, building on work in \cite{GH2009}, but for completeness we include the result for dihedral groups here with a brief proof. As no full classification has been given for size $4$, the one here is a  step in that direction. For the rest of this section, $G$ will be a finite dihedral group of order $2n$ for $n\geq 3$.
\begin{nota}
Where $n$ is even, we denote the non-identity cosets of the maximal subgroups of $G$ by $M_1$, $M_2$ and $M_3$, where $M_1 =$ Ref($G$), $M_2= \{x,x^3,\cdots,x^{n-1},y,x^2y,\cdots,x^{n-2}y\}$ and \linebreak $M_3 = \{x,x^3,\cdots,x^{n-1},xy,x^3y,\cdots,x^{n-1}y\}$ 
respectively. If $n$ is odd, then $M_1$ is the only such coset.
\end{nota}


\begin{thm}\label{R1}
If $S$ is a LMPFS of size $3$ in a finite dihedral group $G$, then $G=D_6$ or $D_8$. Furthermore, up to automorphisms of $G$, there is only one such set; viz. $\{y,xy,x^2y\}$ or $\{x^2,y,xy\}$ according as $G=D_6$ or $D_8$.
\end{thm}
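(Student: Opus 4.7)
The plan is to combine Theorem \ref{CC1} with the refined reflection count from its proof and then carry out a finite case analysis. By Theorem \ref{CC1}, $|G| = 2n \le 12$, so $G \in \{D_6, D_8, D_{10}, D_{12}\}$, and by Remark \ref{RSS}, $S$ contains at least one reflection. Write $A = \mathrm{Rot}(S)$, $B = \mathrm{Ref}(S)$ and $k = |A|$, so $k \in \{0,1,2\}$. Equation (\ref{eq2}) then gives $n = |\mathrm{Ref}(G)| \le |B \cup (A \cup A^{-1})B| \le (2k+1)(3-k)$, forcing $n \le 3$ for $k=0$, $n \le 6$ for $k=1$, and $n \le 5$ for $k=2$.

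For $k = 0$, the only possibility is $G = D_6$ with $S = \mathrm{Ref}(G) = \{y, xy, x^2 y\}$; a direct check gives $SS = \langle x \rangle$, so $S$ is product-free, and $T(S) = G$ confirms local maximality. For $k = 2$, a short inspection in each of $D_6, D_8, D_{10}$ shows that every product-free $2$-subset of nontrivial rotations must have the form $\{a, a^{-1}\}$ with $a$ not an involution, whence $A = A^{-1}$ has size $2$ and $|B \cup (A \cup A^{-1})B| \le 3 < n$, contradicting (\ref{eq2}).

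The heart of the argument is $k = 1$. Write $S = \{a, b_1, b_2\}$ and, using transitivity of $\mathrm{Aut}(D_{2n})$ on $\mathrm{Ref}(G)$, assume $b_1 = y$ and $b_2 = x^j y$; the automorphism $x \mapsto x^{-1}, y \mapsto y$ further standardizes $a$ up to inversion. Condition (\ref{eq2}) becomes $\{y, x^j y, ay, a^{-1} y, a x^j y, a^{-1} x^j y\} \supseteq \mathrm{Ref}(G)$, which is automatic in $D_6$ and restricts $j$ to a short list for each $a$ in $D_{10}$ and $D_{12}$. For each resulting triple $(G, a, j)$, I check product-freeness by computing the nine entries of the multiplication table on $S$, and then compute $T(S) \cap \langle x \rangle = A \cup AA \cup AA^{-1} \cup BB$ to test local maximality, noting that $\sqrt S$ contains no involutions so only non-involutory contributions need inspection. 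In $D_6$ every candidate fails product-freeness; in $D_{10}$ and $D_{12}$ some candidates are product-free but leave a non-involutory rotation outside $T(S) \cup \sqrt S$. The only surviving triple is $(D_8, x^2, 1)$, giving $S = \{x^2, y, xy\}$, for which $SS = \{1, x, x^3, x^2 y, x^3 y\}$ and $T(S) = G$.

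The main obstacle is the bookkeeping in $D_{10}$ and $D_{12}$, where several $(a, j)$ combinations survive (\ref{eq2}); each is killed by a direct calculation showing either $S \cap SS \ne \emptyset$ or $G \ne T(S) \cup \sqrt S$. These calculations are routine but numerous, and the symmetry $a \leftrightarrow a^{-1}$ combined with the normalization $b_1 = y$ is what keeps them manageable.
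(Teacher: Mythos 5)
Your proposal is correct and follows essentially the same route as the paper: the bound $|G|\leq 12$ from Theorem \ref{CC1}, the reflection-covering condition \eqref{eq2} via Remark \ref{RSS}, and a finite case analysis on the number of rotations in $S$, resolved by direct computation in $D_6$--$D_{12}$. The only notable difference is that you dispose of the two-rotation case uniformly (product-freeness forces $A=\{a,a^{-1}\}$, whence $|B\cup(A\cup A^{-1})B|\leq 3<n$) where the paper appeals to ``similar arguments''; just note that in $D_6$ that final inequality reads $3<3$, so there the case must instead be closed by the (vacuous-truth) observation that $\{x,x^{-1}\}$ is already not product-free.
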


\begin{proof}
By Theorem \ref{AH2015A_T} and the fact that a LMPFS of size $3$ cannot be contained in a group of order less than $6$, we have that $6 \leq |G| \leq 12$. By Proposition \ref{AH2015A_Pro}, the only LMPFS of size 3 in $D_6$ is $M_1$. So assume $|G| \geq 8$. Now $S$ contains at least one reflection by Remark \ref{RSS}, and $S$ contains at least one rotation, because otherwise it would be properly contained in the product-free set $M_1$. Suppose $S$ consists of a rotation $a$, and reflections $b_1$ and $b_2$. If $G=D_8$ and $a$ has order $4$ in $G$, then no such $S$ exist (since any such $S$ is either contained in $M_2$ or $M_3$, or not product-free); if $a$ is the unique involution, then $S$ must be mapped by an automorphism of $G$ into $\{x^2,y,xy\}$ since $b_1$ and $b_2$ must be in distinct conjugacy classes of $G$ for such $S$ to exist. If $G=D_{10}$, then by adjoining $a^{-1}$ to $S$, we get a bigger product-free set that contains $S$; thus no such $S$ exist.
Finally, suppose $G=D_{12}$.
If $a$ is the unique involution, then by Equation \eqref{eq2} $|\mathrm{Ref}(G)| = 4$, a contradiction. Suppose $\circ(a)=3$. Observe that Rot($T(S)\cup\sqrt{S}$)$\subseteq(\{1,a,a^{-1},\sqrt{a}\} \cup \{b_1b_2,b_2b_1\})$. Since $\sqrt{a}$ consists of $a^{-1}$ and an element of order $6$, and $\circ(b_1b_2)=\circ(b_2b_1)$, such $S$ cannot exist. Now, suppose $\circ(a)=6$. As Rot($T(S)\cup\sqrt{S}$)$\subseteq(\{1,a,a^2\} \cup \{b_1b_2,b_2b_1\})$, we have that $|$Rot$(T(S)\cup\sqrt{S})|\leq 5<6=|$Rot$(D_{12})|$; thus no such $S$ exist. Similar arguments show that there is no locally maximal product-free set made up of exactly two rotations and one reflection in $D_8$, $D_{10}$ and $D_{12}$.
\end{proof}

\begin{pro}\label{MR3}\label{AH2015A_P}
Let $G$ be a dihedral group of order $2p$, $p>3$ and prime. If $S$ is a locally maximal product-free set of size $4$ in $G$, then $p$ is either 5 or 7 and $S$ contains exactly two non-identity rotations and two reflections.
\end{pro}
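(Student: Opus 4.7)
The plan proceeds in two stages: pin down $p$ using the size bound from Theorem \ref{AH2015A_T}, then rule out all possible splits of $S$ into rotations and reflections except $(|\mathrm{Rot}(S)|,|\mathrm{Ref}(S)|) = (2,2)$.

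For the first stage, Theorem \ref{AH2015A_T} yields $2p = |G| \leq |S|^2 + |S| = 20$, which together with $p > 3$ prime forces $p \in \{5,7\}$. Writing $A = \mathrm{Rot}(S)$ and $B = \mathrm{Ref}(S)$, Remark \ref{RSS} gives $|B| \geq 1$, while $A$ is a sum-free subset of the cyclic group $\langle x\rangle$ of prime order $p$; a direct check (every sum-free subset of $\mathbb{Z}_5 \setminus \{0\}$ or $\mathbb{Z}_7 \setminus \{0\}$ has size at most two) gives $|A| \leq 2$. So it suffices to rule out $|B| \in \{3,4\}$. The case $|B| = 4$ is immediate: then $S \subseteq M_1 = \mathrm{Ref}(G)$, which by Proposition \ref{AH2015A_Pro} is itself a product-free set of size $p \geq 5 > |S|$, contradicting local maximality.

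The main obstacle is the case $|A| = 1$, $|B| = 3$. Write $S = \{\alpha\} \sqcup B$ with $\alpha \in \langle x\rangle \setminus \{1\}$. From \eqref{eq1} the rotations of $T(S)$ form $\{1,\alpha,\alpha^2\} \cup BB$, and since $p$ is odd and no reflection squares into $S$, the set $\sqrt{S}$ contributes only the unique rotation $\alpha'$ with $(\alpha')^2 = \alpha$; explicitly $\alpha' = \alpha^{(p+1)/2}$. Writing $B = \{x^{k_1}y, x^{k_2}y, x^{k_3}y\}$ and setting $a = k_2-k_1$, $b = k_3-k_1$, one computes $BB = \{1, x^{\pm a}, x^{\pm b}, x^{\pm(b-a)}\}$, a symmetric subset of $\langle x\rangle$ containing $1$. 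Since the only self-inverse element of $\langle x\rangle$ is $1$, $|BB|$ is odd; and a short subcase analysis shows $|BB| = 3$ would force $3d \equiv 0 \pmod p$ for some $d \not\equiv 0$, which is impossible for $p > 3$. Hence $|BB| \in \{5,7\}$.

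If $|BB| = p$ (covering $|BB| = 7$ in $D_{14}$ and $|BB| = 5$ in $D_{10}$), then $BB = \langle x\rangle \ni \alpha$, contradicting product-freeness of $S$. The only remaining case is $D_{14}$ with $|BB| = 5$: then $\langle x\rangle \setminus BB$ is a symmetric pair $\{g, g^{-1}\}$ with $g \neq 1$, and product-freeness forces $\alpha \in \{g, g^{-1}\}$. Substituting the symmetric choice $\alpha = g$ and $\alpha' = g^{(p+1)/2} = g^4$, the rotations in $T(S) \cup \sqrt{S}$ lie inside $(\langle x\rangle \setminus \{g, g^{-1}\}) \cup \{g, g^2, g^4\}$, so local maximality would require $g^{-1} \in \{g, g^2, g^4\}$, i.e.\ $g^2 = 1$, $g^3 = 1$, or $g^5 = 1$; but each of these forces $g = 1$ in a cyclic group of prime order $7$, a contradiction. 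The only surviving split is $(|A|,|B|) = (2,2)$, as required, with the bulk of the work (and the only real obstacle) concentrated in the $|A|=1$ case for $D_{14}$.
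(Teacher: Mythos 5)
Your proof is correct, and its overall skeleton (bound $|G|$ via Theorem \ref{AH2015A_T}, then eliminate every rotation/reflection split other than $(2,2)$) matches the paper's; but the way you kill the unbalanced splits is genuinely different. The paper disposes of both the one-rotation and the three-rotation cases with a single observation: if $x^i\in S$ with $x^{-i}\notin S$, then $S\cup\{x^{-i}\}$ is still product-free (this uses that $BB$ is symmetric, so $x^i\notin SS$ forces $x^{-i}\notin SS$, and that $p\neq 3$ kills $x^{-2i}\in S$), contradicting local maximality. You instead rule out three rotations by the fact that a product-free subset of $C_5$ or $C_7$ has size at most $2$, and you rule out one rotation by computing $\mathrm{Rot}(T(S)\cup\sqrt{S})=\{1,\alpha,\alpha^2,\alpha^{(p+1)/2}\}\cup BB$, classifying $|BB|\in\{5,7\}$, and showing this set cannot cover $\langle x\rangle$ -- all of which checks out, including the $|BB|=3\Rightarrow 3d\equiv 0\pmod p$ subcase and the final $g^{-1}\in\{g,g^2,g^4\}\Rightarrow g=1$ step in $D_{14}$. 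What the paper's route buys is brevity and uniformity (one two-line trick handles both cases, and the same trick reappears in Proposition \ref{P3} and the discussion of Table 1 of Street--Whitehead); what your route buys is that it works entirely through the local-maximality criterion of Lemma \ref{GH2009L} without needing to exhibit a larger product-free set, and the intermediate facts you establish (the parity of $|BB|$ and the structure of $\mathrm{Rot}(T(S)\cup\sqrt{S})$) are reusable in the later classification arguments of Section 3.
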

\begin{proof}
By Theorem \ref{CC1}, either $G$ is at least one of $D_{10}$ or $D_{14}$, or no such $G$ exists. Let $G$ be either $D_{10}$ or $D_{14}$. Suppose for a contradiction that $S$ does not contain exactly two non-identity rotations and two reflections. By Remark \ref{RSS}, $S$ contains at least one reflection. At the other extreme, if every element of $S$ is a reflection then $S$ is properly contained in $M_1$, so $S$ is not locally maximal. Therefore $S$ contains at least one rotation. Suppose $S$ contains $1$ non-identity rotation (say $x^i$) and three reflections. Then a quick check shows that $S \cup \{x^{-i}\}$ is also product-free, contradicting the maximality of $S$. Similarly, if $S$ contains three non-identity rotations and one reflection, then there exists a rotation in $S$ whose inverse is not in $S$, and by adjoining this inverse to $S$, we again obtain a contradiction to the locally maximal condition on $S$. Therefore $S$ contains exactly two reflections and two rotations. 
\end{proof}
\begin{cor}\label{MR5}\label{AH2015A_C}
Suppose $G$ is a dihedral group of order $2p$ ($p>3$ and prime). If $S$ is a LMPFS of size $4$ in $G$, then $p$ is either 5 or 7, and $S$ is mapped by an automorphism of $G$ into $\{x^2,x^3,y,x^{-1}y\}$.
\end{cor}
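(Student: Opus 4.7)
The plan is as follows. By Proposition~\ref{MR3}, $p\in\{5,7\}$ and $S$ consists of two non-identity rotations and two reflections; my strategy is to use the automorphisms $\phi_k:x\mapsto x^k$ ($k\in(\mathbb{Z}/p)^*$) and $\psi_m:y\mapsto x^m y$ ($m\in\mathbb{Z}/p$) of $D_{2p}$ to normalise $S$, then enumerate.

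Since $\psi_{-i}$ sends $x^i y$ to $y$ while fixing all rotations, I may assume $y\in S$; writing $S=\{x^a,x^b,y,x^c y\}$ with $c\neq 0$, I then apply $\phi_{c^{-1}}$ (which fixes $y$) to further assume $c=1$, so $S=\{x^a,x^b,y,xy\}$ with $0<a<b<p$. From the products $y\cdot xy=x^{-1}$ and $xy\cdot y=x$ not lying in $S$, product-freeness forces $\{a,b\}\cap\{1,p-1\}=\emptyset$; from $x^a\cdot x^a$ and $x^b\cdot x^b$ it forces $2a\not\equiv b$ and $2b\not\equiv a\pmod{p}$. The remaining product-free conditions then follow automatically.

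For $p=5$ this immediately yields $\{a,b\}=\{2,3\}$, and a direct check shows $S\cup SS=D_{10}$, so $S=\{x^2,x^3,y,xy\}$ is locally maximal. For $p=7$ the admissible pairs are $\{2,3\},\{2,5\},\{3,4\},\{4,5\}$. To separate the locally maximal cases I invoke Equation~\eqref{eq2}: the condition $\mathrm{Ref}(G)=B\cup(A\cup A^{-1})B$ translates into the residue requirement $\{0,1,\pm a,1\pm a,\pm b,1\pm b\}\supseteq\mathbb{Z}/7$. A quick check shows that only $\{2,3\}$ and $\{4,5\}$ meet this, and in both cases $S\cup SS=D_{14}$, confirming local maximality.

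The final step is to identify the two surviving $p=7$ sets under an automorphism and to map each to the target form. The composition $\psi_1\circ\phi_{-1}$ sends $\{x^4,x^5,y,xy\}$ first to $\{x^3,x^2,y,x^{-1}y\}$ (via $\phi_{-1}$) and then to $\{x^3,x^2,xy,y\}=\{x^2,x^3,y,xy\}$ (via $\psi_1$). Finally, $\psi_{-1}$ maps $\{x^2,x^3,y,xy\}$ to $\{x^2,x^3,y,x^{-1}y\}$, an identification which works equally for $p=5$ and completes the reduction. The main obstacle is just the enumeration for $p=7$; Equation~\eqref{eq2} supplies the clean combinatorial criterion that keeps this case-check concise.
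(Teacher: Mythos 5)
Your proof is correct: the normalisation via $\phi_k$ and $\psi_m$, the product-freeness conditions $\{a,b\}\cap\{1,p-1\}=\emptyset$, $2a\not\equiv b$, $2b\not\equiv a$, the elimination of $\{2,5\}$ and $\{3,4\}$ via Equation \eqref{eq2}, and the final automorphism identifications all check out. The paper states this corollary without any written proof, treating it as an immediate consequence of Proposition \ref{MR3} together with a routine check; your argument is precisely that routine normalisation-and-enumeration, so it matches the intended approach rather than offering a different one.
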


\begin{pro}\label{P1}
If $S$ is a locally maximal product-free set of size $4$ containing exactly four involutions in a dihedral group $G$ such that $10\leq |G|\leq 20$, then $G$ can only be $D_{12}$. Furthermore, $S$ is mapped by an automorphism of $D_{12}$ into $\{x^3,y,xy,x^2y\}$.
\end{pro}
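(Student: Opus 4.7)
The plan is to use Equation \eqref{eq2} from Theorem \ref{CC1} to pin down $G$, then enumerate the candidate sets and use automorphisms. In $D_{2n}$ the non-identity involutions are the $n$ reflections together with $x^{n/2}$ when $n$ is even. Since $|S|=4$ and $|\mathrm{Ref}(G)|=n\geq 5$, a locally maximal $S$ cannot consist of four reflections (otherwise $S\subsetneq M_1$, which is product-free by Lemma \ref{lemma1.2}). Hence $n$ must be even, and $S$ must contain $x^{n/2}$ together with exactly three reflections. Setting $A=\{x^{n/2}\}$ and $B=\mathrm{Ref}(S)$, the identity $A=A^{-1}$ reduces Equation \eqref{eq2} to $\mathrm{Ref}(G)=B\cup x^{n/2}B$, whose left side has size $n$ and whose right side has size at most $6$. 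Thus $n\leq 6$, forcing $G=D_{12}$ and $|B\cup x^3B|=6$; equivalently, $B$ picks exactly one element from each pair $\{x^iy,x^{i+3}y\}$ for $i=0,1,2$.

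This leaves eight candidate sets $S=\{x^3,x^{a_0}y,x^{a_1}y,x^{a_2}y\}$ with $a_0\in\{0,3\}$, $a_1\in\{1,4\}$, $a_2\in\{2,5\}$; each is product-free by direct inspection. To test local maximality I would first observe that $\sqrt{S}=\emptyset$: no rotation squares to $x^3$ (the congruence $2k\equiv 3\pmod{6}$ has no solution) and every reflection squares to $1\notin S$. So by Lemma \ref{GH2009L}, $S$ is locally maximal iff $T(S)=G$, and since $\mathrm{Ref}(G)\subseteq T(S)$ already holds by construction, only $\mathrm{Rot}(G)\subseteq T(S)$ remains. The rotations in $T(S)$ are $1$, $x^3$, and the elements $x^{a_i-a_j}$ arising from products of reflection pairs. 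A short enumeration over the eight cases shows that the set of differences $a_i-a_j\pmod{6}$ covers $\{0,\ldots,5\}$ in exactly the six cases where $\{a_0,a_1,a_2\}$ consists of three cyclically consecutive residues modulo $6$; the two ``alternating'' choices $\{0,2,4\}$ and $\{1,3,5\}$ fail (indeed $S\cup\{x\}$ is product-free in those cases, so $S$ is not locally maximal).

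Finally, the automorphisms $x\mapsto x$, $y\mapsto x^a y$ of $D_{12}$ act transitively on the six consecutive sets via $\{y,xy,x^2y\}\mapsto\{x^ay,x^{a+1}y,x^{a+2}y\}$, so every surviving $S$ is automorphic to $\{x^3,y,xy,x^2y\}$, proving the claim. The main subtlety is that Equation \eqref{eq2} is only a necessary condition on the reflections; the rotation constraint genuinely cuts the eight configurations down to six, so one must be careful not to confuse product-freeness with local maximality when inspecting the alternating candidates.
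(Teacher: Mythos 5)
Your proof is correct and follows essentially the same strategy as the paper: use Equation \eqref{eq2} to force $n$ even, $x^{n/2}\in S$ and $n\leq 6$ (hence $G=D_{12}$), then enumerate the candidate sets and identify them under the automorphisms $y\mapsto x^ay$. There is, however, one point where you are more careful than the paper, and the extra care is genuinely needed. The paper asserts that requiring the six reflections $b_1,b_2,b_3,x^3b_1,x^3b_2,x^3b_3$ to be pairwise distinct already forces $S$ to have the form $\{x^3,x^iy,x^{i+1}y,x^{i+2}y\}$. In fact that distinctness condition only forces $\mathrm{Ref}(S)$ to contain exactly one reflection from each pair $\{x^iy,x^{i+3}y\}$, which yields your eight candidates, including the two alternating sets $\{x^3,y,x^2y,x^4y\}$ and $\{x^3,xy,x^3y,x^5y\}$. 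Both of these are product-free and do cover all of $\mathrm{Ref}(D_{12})$, but since $\sqrt S=\emptyset$ and their reflection products only produce the rotations $1,x^2,x^4$ (plus $x^3$ from $S$), they miss $x$ and $x^5$ and hence fail local maximality --- indeed $S\cup\{x\}$ remains product-free, exactly as you observe. So your explicit verification of the rotation condition is not merely a stylistic ``subtlety'': it is the step that closes a small gap in the paper's own argument, and the final list of six sets and the transitive action of $\phi_a\colon x\mapsto x,\ y\mapsto x^ay$ then match the paper exactly.
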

\begin{proof}
 As $M_1$ contains all reflections in $G$, in order for $S$ not to be properly contained in $M_1$, we must have that $\frac{|G|}{2}$ is even and that $S$ contains the unique involution $z$ in the rotation subgroup. But now by Equation \eqref{eq2} we obtain $|\mathrm{Ref}(G)| \leq 6$. Therefore the only possibility is $D_{12}$. So assume $G = D_{12}$ and let $S=\{z,b_1,b_2,b_3\}$, where $b_1,b_2$ and $b_3$ are reflections and $z = x^3$.  If $S$ is locally maximal product-free in $D_{12}$, then by Remark \ref{RSS}, Ref($D_{12}$)$=$Ref($S\sqcup SS$)$\subseteq$ $\{b_1,b_2,b_3,x^3b_1,x^3b_2, x^3b_3\}$. If any two elements of Ref($S\sqcup SS$) are equal, then $S$ is not locally maximal in $D_{12}$.
For no two elements of Ref($S\sqcup SS$) to be equal, we must have that $S$ is of the form $\{x^3,x^iy,x^{i+1}y,x^{i+2}y\}$ for $i=0,1,2,3,4$ or $5$. Thus, the only possible choices are $S:=\{x^3,y,xy,x^2y\}$, $S_1:=\{x^3,xy,x^2y,x^3y\}$, $S_2:=\{x^3,x^2y,x^3y,x^4y\}$, $S_3:=\{x^3,x^3y,x^4y,x^5y\}$, $S_4:=\{x^3,y,x^4y,x^5y\}$ and $S_5:=\{x^3,y,xy,x^5y\}$. By Lemma~\ref{GH2009L}, $S$ is locally maximal product-free in $D_{12}$. As the automorphism $\phi_i: x \mapsto x, y\mapsto x^iy$ maps $S$ into $S_i$ for each $i\in [0,5]$, we are done.
\end{proof}

\begin{lem}\label{L2}
Suppose $S$ is a locally maximal product-free set of size $4$ consisting of three involutions and one non-involution in a dihedral group $G$ such that $10\leq |G|\leq 20$. If $G=D_{12}$, then $S$ is automorphic to $\{x^2,x^3,y,x^5y\}$. Moreover, no such $S$ exist if $|G|\neq 12$. 
\end{lem}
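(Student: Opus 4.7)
The plan is to split according to which involutions lie in $S$. In $D_{2n}$ every reflection is an involution and, when $n$ is even, the only rotation-involution is $x^{n/2}$; since all reflections are involutions, the non-involution in $S$ must be a rotation $x^i$, and the three involutions of $S$ are either (A) three reflections $b_1, b_2, b_3$, or (B) two reflections $b_1, b_2$ together with $x^{n/2}$ (so $n$ is even).

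For case (A) I adapt the argument of Proposition \ref{MR3}. The key observation is that, provided $x^i$ does not have order $3$, the augmented set $S \cup \{x^{-i}\}$ is still product-free: the only new product that might land in $S \cup \{x^{-i}\}$ is $x^{-i}\cdot x^{-i} = x^{-2i}$, which equals $x^i$ precisely when $x^{3i} = 1$; the remaining new products reduce via $x^{-i}b_j = b_j x^i$ and $b_j x^{-i} = x^i b_j$ to products already controlled by $S$ being product-free. This contradicts local maximality and rules out case (A) whenever $3\nmid n$, so in our range only $n = 6$ and $n = 9$ require separate treatment. In $D_{12}$ ($i \in \{2, 4\}$), the product-free conditions force $a_j - a_k \not\equiv \pm 2 \pmod 6$ for all $j\neq k$, so any two reflection-indices differ in parity; this is impossible for three values modulo $2$. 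In $D_{18}$ ($i \in \{3, 6\}$), the analogous conditions place the three $a_j$ in distinct cosets of $\langle x^3 \rangle$ in $\mathbb Z_9$; writing $d_1, d_2, d_3 = d_1 + d_2$ for the three pairwise differences (all in $\mathbb Z_9 \setminus \{0, 3, 6\}$), a short case check using $d_3 \not\in\{0,3,6\}$ gives $|\{\pm d_1, \pm d_2, \pm d_3\}| \leq 4$, hence $|BB|\leq 5$ and Rot$(T(S))$ contains at most $7$ rotations, while $\sqrt S = \{x^{2i}\}$ is already in Rot$(T(S))$, so $T(S) \cup \sqrt S$ misses at least two rotations.

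For case (B) set $A = \{x^i, x^{n/2}\}$ and $B = \{b_1, b_2\}$. Equation \eqref{eq2} in the proof of Theorem \ref{CC1} gives Ref$(G) = B \cup (A \cup A^{-1})B$, so Ref$(T(S))$ is covered by two translates $a_1 + T$ and $a_2 + T$ of $T := \{0, i, -i, n/2\} \subset \mathbb Z_n$, each of size $4$. For $D_{20}$, $|\mathrm{Ref}(G)| = 10 > 8$, so the covering is impossible. For $D_{16}$, enumerating $i \in \{1, 2, 3, 5, 6, 7\}$ shows that only $i \in \{2, 6\}$ admit a $d$ with $T \cup (T+d) = \mathbb Z_8$; for those, a separate count yields Rot$(T(S)) = \{1, x^2, x^4, x^6, x^d, x^{-d}\}$ and $\sqrt S = \{x, x^2, x^5, x^6\}$, and since the product-free constraints force $\{d, -d\} \in \{\{1, 7\}, \{3, 5\}\}$, the union Rot$(T(S)) \cup \sqrt S$ always omits one of $x^3, x^7$. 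Finally, for $D_{12}$ the parallel enumeration of $i \in \{1, 2, 4, 5\}$ eliminates $i \in \{1, 5\}$ by a reflection-coverage count, while for $i \in \{2, 4\}$ the product-free constraints force $a_1 - a_2 \in \{1, 5\} \pmod 6$ and a direct computation gives $T(S) = G$; all resulting locally maximal sets are images of $\{x^2, x^3, y, x^5 y\}$ under the automorphisms $\phi_k \colon x \mapsto x,\ y \mapsto x^k y$ (used together with $x \mapsto x^{-1}$ to swap the orbits $i = 2$ and $i = 4$).

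The principal obstacle is the bookkeeping for $D_{16}$ and $D_{18}$: neither reflection nor rotation coverage alone excludes $S$, so one has to combine Rot$(T(S))$ with $\sqrt S$ and identify precisely which element of $G$ is missing.
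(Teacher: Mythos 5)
Your proof is correct and follows the same two-case decomposition as the paper (three reflections, versus two reflections plus the central rotation $x^{n/2}$), and your Case (A) is essentially the paper's argument -- adjoin $x^{-i}$ unless $\circ(x^i)=3$, then a parity argument in $D_{12}$ -- except that you supply the $D_{18}$ analysis in full, which the paper leaves to the reader. Where you genuinely diverge is Case (B). The paper's argument there is a single uniform observation: writing $\mathrm{Rot}(G)=\sqrt S\cup\{1,a,a^2,z,az,a^{-1}z,b_1b_2,b_2b_1\}$, the element $a^{-1}$ must land somewhere in this set, and product-freeness eliminates every possibility except $a^{-1}\in\sqrt a\cup\{a^2\}$, forcing $\circ(a)=3$ and hence $G=D_{12}$ in one stroke, with no case analysis on $|G|$ or on $i$. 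Your route instead uses the reflection-coverage identity $\mathrm{Ref}(G)=B\cup(A\cup A^{-1})B$ to kill $D_{20}$ by a cardinality count, and then enumerates translates $T\cup(T+d)$ of $T=\{0,i,-i,n/2\}$ for $D_{16}$ and $D_{12}$, finishing $D_{16}$ with a rotation count. Both work; the paper's is shorter and scales better (it needs no knowledge of which $i$ occur), while yours is more mechanical and produces the surviving sets explicitly as a by-product. Two small points of care in your write-up: when you adjoin $x^{-i}$ you should also note that no \emph{old} product can equal the new element (this holds because $b_jb_k=x^{-i}$ would force $b_kb_j=x^i\in S\cap SS$, and $x^{2i}=x^{-i}$ again reduces to $x^{3i}=1$); and in $D_{12}$ with $i=1$ the coverage count alone does \emph{not} eliminate the case (the translate $T+d$ with $d\in\{1,5\}$ does cover $\mathrm{Ref}(G)$) -- you need the product-free constraint $d\notin\{0,3,i,-i\}$ to get the contradiction, so the phrase ``by a reflection-coverage count'' is slightly too strong there.
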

\begin{proof} The argument splits into two cases: Case I where the involutions are all reflections, and Case~II where $\frac{|G|}{2}$ is even and one of the involutions is the central rotation $x^{|G|/4}$.\\

Case I: Let $S=\{a,b_1,b_2,b_3\}$, where $a$ is the non-involution and $b_1,b_2$ and $b_3$ are reflections.
If $a$ has order at least $4$, then $S\cup\{a^{-1}\}$ is product-free, contradicting the local maximality of $S$. Thus $a$ has order $3$ and $G$ is either $D_{12}$ or $D_{18}$. However, no such combination (with any three reflections) gives a locally maximal product-free set of size $4$. For example in $D_{12}$, if $b_1 = x^iy$, $b_2 = x^jy$ and $b_3 = x^ky$ then at least two of $i, j$ and $k$  must have the same parity. But that implies $a \in SS$, a contradiction.  
\\

Case II: Let $S=\{a,b_1,b_2,z\}$, where $a$, $b_1$ and $b_2$ are as in Case I, and $z$ is the unique involution in the rotation subgroup. Here $G$ is $D_{12}$, $D_{16}$ or $D_{20}$. A quick calculation using the fact that \linebreak $G = T(S) \cup \sqrt S$ shows that $$\mathrm{Rot}(G) = \sqrt S \cup\{1, a, a^2, z, az, a^{-1}z, b_1b_2, b_2b_1\}.$$
Since $b_1b_2 = (b_2b_1)^{-1}$, and $S$ is product-free, we have that $a^{-1} \in \sqrt S \cup \{a^2, az\}$. If $a^{-1} \in \sqrt z$ or $a^{-1} = az$ then $a^2 = z$, contradicting the fact that $S$ is product-free. Thus $a^{-1} \in \sqrt a \cup a^2$, which implies $\circ(a) = 3$, and so $G = D_{12}$. By Lemma \ref{GH2009L}, the product-free set $\{x^2,x^3,y,x^5y\}$ is locally maximal. A careful check shows that any other arising LMPFS must be mapped by an automorphism of $D_{12}$ into $\{x^2,x^3,y,x^5y\}$.
\end{proof}

\begin{pro}\label{P3}
Let $S$ be a locally maximal product-free set of size $4$ consisting of two involutions and two non-involutions in a dihedral group $G$ such that $10\leq |G|\leq20$. Then, up to automorphism,  $S$ and $G$ are given in the table below.
\begin{center}
\begin{tabular}{|p{0.5cm}|p{5.2cm}|}\hline
$G$ &  $\qquad \qquad S$\\ \hline
$D_{10}$ & $\{x^2,x^3,y,x^4y\}$\\ \hline
$D_{12}$ & $\{x,x^5,y,x^3y\}, \{x,x^4,y,x^3y\}$ \\ \hline
$D_{14}$ & $\{x^2,x^3,y,x^6y\}$\\ \hline
$D_{16}$ & $\{x^2,x^3,y,x^7y\}, \{x,x^6,y,x^4y\}$\\ \hline
$D_{18}$ & $\{x^2,x^5,y,x^8y\}$\\ \hline
$D_{20}$ & $\{x,x^8,y,x^5y\}$\\ \hline
\end{tabular}
\end{center}
\end{pro}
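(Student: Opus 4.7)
The plan is to exploit the observation that in $D_{2n}$ the only involutions are the reflections together with, when $n$ is even, the central rotation $z=x^{n/2}$. Since every reflection is an involution, the two non-involutions of $S$ must be non-involution rotations $a_1,a_2$. The two involutions then fall into one of two configurations: (A) both are reflections $b_1,b_2$, giving $S=\{a_1,a_2,b_1,b_2\}$; or (B) one is the central rotation $z$ (so $n$ is even) and the other is a reflection $b$, giving $S=\{a_1,a_2,z,b\}$. Applying Remark \ref{RSS} in each case bounds $n$: in Case A the reflections of $S\sqcup SS$ lie in $\{b_1,b_2\}\cup \{a_1^{\pm1},a_2^{\pm1}\}\{b_1,b_2\}$, giving $n\le 10$; in Case B they lie in $\{b,zb\}\cup\{a_1^{\pm1},a_2^{\pm1}\}b$, giving $n\le 6$.

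For Case A, I would, for each $n\in\{5,6,\ldots,10\}$, parametrize $S=\{x^i,x^j,x^ky,x^\ell y\}$ and impose three families of constraints: (i) product-freeness (no pairwise product of elements of $S$ lies in $S$); (ii) the equality $\mathrm{Ref}(G)=\mathrm{Ref}(S\sqcup SS)$ from Remark \ref{RSS}, forcing the at most ten candidate reflections above to exhaust $\mathrm{Ref}(G)$; and (iii) the maximality consequence that for each $r\in\{a_1^{-1},a_2^{-1}\}$ not already in $S$, the enlargement $S\cup\{r\}$ fails to be product-free. Condition (iii) forces one of $r^2\in S$, $rs\in S$ for some $s\in S$, $sr\in S$, or $r\in SS$, each translating into a short linear congruence on $i,j,k,\ell$ modulo $n$. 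Surviving candidates are then reduced modulo $\mathrm{Aut}(D_{2n})$, which is generated by the inner automorphisms together with the maps $x\mapsto x^c$ for $\gcd(c,n)=1$; these act transitively on generators of $\langle x\rangle$ of each fixed order and allow one reflection to be normalized to $y$. Finally, local maximality of each representative is confirmed by computing $T(S)\cup\sqrt S$ and applying Lemma \ref{GH2009L}.

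Case B is disposed of by a shorter analogous argument: the bound $n\le 6$ restricts to $D_{12}$, where the product-free conditions involving $z$ (notably $za_i\notin S$ and $a_1a_2\notin S$), together with the requirement from Remark \ref{RSS} that all six reflections of $D_{12}$ occur among $\{b,zb,a_1^{\pm1}b,a_2^{\pm1}b\}$, rule out every choice of $a_1,a_2\in\{x,x^2,x^4,x^5\}$.

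The main obstacle is the combinatorial bookkeeping in Case A, particularly when $a_2\neq a_1^{-1}$: the conditions in (iii) proliferate and must be intersected with (i) and (ii) separately for each of the six values of $n$, and the automorphism reduction must be executed carefully to avoid both omissions and duplications. Once organized, each individual step reduces to elementary modular arithmetic, and the enumeration terminates with precisely the eight representatives displayed in the table.
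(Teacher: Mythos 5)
Your approach is correct and is essentially the one the paper takes: split according to whether the two involutions are both reflections or whether one of them is the central rotation $x^{n/2}$, bound $n$ in each case via Remark \ref{RSS} (equivalently Equation \eqref{eq2}), prune candidates using product-freeness and the fact that adjoining $a_i^{-1}$ must destroy product-freeness, and certify the survivors with Lemma \ref{GH2009L}. The paper organizes its Case I by the orders of $a_1$ and $a_2$ (and hands $D_{10}$ and $D_{14}$ off to Corollary \ref{AH2015A_C}) rather than enumerating residues, but that is purely organizational. One substantive point in your favour: in your Case B the correct list of candidate reflections is $\{b,zb\}\cup\{a_1^{\pm 1},a_2^{\pm 1}\}b$, six elements, giving $n\le 6$ and hence a genuine case to examine in $D_{12}$; the paper's Case II omits $b$ itself from this list, concludes $|G|\le 10$, and so never looks at $D_{12}$ --- your check that every admissible pair $\{a_1,a_2\}$ there violates product-freeness (via $a_1a_2=z$ or $za_i\in S$) is a step that is actually needed. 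The only shortfall is that the Case A enumeration is described rather than carried out; since every condition you impose is necessary for local maximality and you verify the survivors directly, nothing can be lost or spuriously gained, but the classification ultimately rests on executing that finite check.
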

\begin{proof}
Case I: Let $S=\{a_1,a_2,b_1,b_2\}$, where $a_1,a_2$ are non-involutions, and $b_1,b_2$ are reflections. Assume for the moment that $|G| > 12$. If $a_2 = a_1^{-1}$, then by Equation \eqref{eq2} $|\mathrm{Ref}(G)| \leq 6$, which contradicts our assumption. Thus $a_2 \neq a_1^{-1}$.  In the case of $D_{18}$, this means that at least one of $a_1$ and $a_2$ has order 9. If $\circ(a_1)=9=\circ(a_2)$, with $a_2 \neq a_1^{-1}$, then $S$ is automorphic to $\{x^2,x^5,y,x^8y\}$, which is locally maximal product-free.  Next suppose $G$ is $D_{16}$ or $D_{20}$. If $\circ(a_1)=3$ and $\circ(a_2) = 9$ then a quick check shows that $S$ is not locally maximal product-free. Suppose $\circ(a_1)=\frac{|G|}{4}=\circ(a_2)$. $S$ is not feasible in $D_{16}$ since then $a_2 = a_1^{-1}$. On the other hand, such $S$ is not also possible in $D_{20}$ as Ref($T(S)\cup \sqrt{S}$)$\subseteq \{b_1,b_2,a_1b_1,a_1b_2,b_1a_1,b_2a_1,a_2b_1,a_2b_2,b_1a_2,b_2a_2\}$, and either $a_2=a_1^{-1}$ or $\{a_1,a_2\}$ is not product-free. 
Suppose $\circ(a_1)=\frac{|G|}{2}=\circ(a_2)$. Then Rot($T(S)\cup \sqrt{S})\subseteq $Rot($T(S))\subseteq \{1,a_1,a_2,a_1a_2,a_1^2,a_2^2,b_1b_2,b_2b_1,a_1^{-1}a_2,a_2^{-1}a_1\}$. So the only possible odd powers of a generator of $C_{\frac{|G|}{2}}$ are $a_1,a_2,b_1b_2$ and $b_2b_1$. If $G=D_{20}$, then no such $S$ exists. On the other hand, if $G=D_{16}$, then as $b_2b_1=(b_1b_2)^{-1}$, we must have that $a_2={a_1}^{-1}$ which leads to the conclusion that Ref($T(S)\cup \sqrt{S}$)$\subseteq \{b_1,b_2,a_1b_1,a_1b_2,a_2b_1,a_2b_2\}$, a contradiction. Finally, suppose $\circ(a_1)=\frac{|G|}{4}<\frac{|G|}{2}=\circ(a_2)$. The set $S$ is locally maximal product-free by Lemma \ref{GH2009L}. A careful check shows that any such set must be mapped by an automorphism of the group into $\{x^2,x^3,y,x^7y\}$ or $\{x,x^6,y,x^4y\}$ if $G=D_{16}$, and $\{x,x^8,y,x^5y\}$ if $G=D_{20}$. Now suppose $G=D_{12}$. If $\circ(a_1)=3=\circ(a_2)$, then no such $S$ exist since $a_2=a_1^{2}$. If $\circ(a_1)=6=\circ(a_2)$, then $a_2=a_1^{-1}$. Such $S$ exists, and must be mapped by an automorphism of $D_{12}$ into $\{x,x^5,y,x^3y\}$. If $\circ(a_1) \neq \circ(a_2)$. Any resulting product-free set is locally maximal, and must be mapped by an automorphism of $G$ into $\{x,x^4,y,x^3y\}$. Finally, in the case $G=D_{10}$ or $D_{14}$, the result follows from Corollary \ref{AH2015A_C}.
\\

Case II: Let $S=\{a_1,a_2,b,z\}$, where $a_1,a_2$ are non-involutions, $b$ is a reflection and $z$ is the unique involution in Rot($G$). As $|$Ref($T(S)\cup \sqrt{S}$)$|\leq |\{zb,a_1b,a_2b,a_1^{-1}b,a_2^{-1}b\}|$, $|G|\leq 10$. So $S$ can only exist in $D_{10}$. By Proposition \ref{AH2015A_P}, no such $S$ exist in $D_{10}$.
\end{proof}

\begin{lem}\label{L4}
There is no locally maximal product-free set of size $4$ consisting of at most one involution in a finite dihedral group.
\end{lem}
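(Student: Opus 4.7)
My plan is to argue by contradiction, first pinning down the reflection/rotation split of $S$ very tightly, then exploiting Equation \eqref{eq2} from the proof of Theorem \ref{CC1} to squeeze the possible orders of $G$ down to two cases, and finally invoking Proposition \ref{AH2015A_P} to finish.

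First I would observe that ``at most one involution'' combined with Remark~\ref{RSS} (which forces $S$ to contain at least one reflection, and reflections are involutions) immediately forces $S$ to contain \emph{exactly} one involution, namely a single reflection. Thus writing $A = \mathrm{Rot}(S)$ and $B = \mathrm{Ref}(S)$, we have $|B|=1$, $|A|=3$, and every element of $A$ is a rotation of order at least $3$. This initial reduction is the cleanest part of the argument.

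Next I would apply Equation \eqref{eq2}, which gives $\mathrm{Ref}(G) = B \cup (A \cup A^{-1})B$. Because $A$ consists only of non-involution rotations, $A \cup A^{-1}$ is closed under inversion and contains no involutions, so its size is even. With $|A|=3$ this forces $|A \cup A^{-1}| \in \{4,6\}$, corresponding to whether one of the three non-involutions in $A$ has its inverse already in $A$ or whether none does. Since $|B|=1$ and the identity is not in $A \cup A^{-1}$, the union in Equation \eqref{eq2} is disjoint and has size $|A\cup A^{-1}|+1$. Hence $n = |\mathrm{Ref}(G)| \in \{5,7\}$.

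Finally, since $G \in \{D_{10}, D_{14}\}$ has order $2p$ for a prime $p>3$, Proposition \ref{AH2015A_P} applies: any LMPFS of size~$4$ in such a group must contain exactly two non-identity rotations and two reflections. This contradicts $|B|=1$, so no such $S$ exists. The only part that needs a moment's care is the parity argument bounding $|A\cup A^{-1}|$, which is why this is the key quantitative step; once that is in hand the result is immediate from results already proved in the paper.
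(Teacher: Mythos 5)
Your proof is correct, but it takes a genuinely different route from the paper's. Both arguments begin the same way: the single permitted involution must, by Remark~\ref{RSS}, be a reflection, so $S$ consists of one reflection $b$ and three rotations of order at least~$3$. From there the paper simply bounds $|\mathrm{Ref}(T(S)\cup\sqrt S)|$ above by the crude count $|\{a_1b,a_2b,a_3b,a_1^{-1}b,a_2^{-1}b,a_3^{-1}b\}|$, concludes $8\le|G|\le 12$, and then disposes of $D_8$ via Proposition~\ref{AH2015A_Pro} (every product-free set of size $4$ there is one of the cosets $M_i$, each containing at least two involutions), $D_{10}$ via Proposition~\ref{AH2015A_P}, and $D_{12}$ by an ad hoc observation about $C_6$. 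You instead exploit the \emph{equality} in Equation~\eqref{eq2} together with the observation that $A\cup A^{-1}$ is inversion-closed and involution-free, hence of even size $4$ or $6$; since $B$ is a singleton and the union with $B$ is disjoint, this pins $n=|\mathrm{Ref}(G)|$ to exactly $5$ or $7$, and Proposition~\ref{AH2015A_P} then finishes both cases at once. Your version buys two things: it collapses three separate case analyses into a single application of Proposition~\ref{AH2015A_P}, and it is more careful than the paper's count, which omits $b$ itself from the list of reflections in $T(S)$ (the correct crude bound is $7$, not $6$, so the paper's argument as written should also have addressed $D_{14}$ --- harmless, since Proposition~\ref{AH2015A_P} covers that case too, but your parity argument sidesteps the issue cleanly). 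The paper's route is marginally more elementary in that it needs only an upper bound rather than the exact disjoint-union count, but yours is tighter and equally well supported by results already established in the paper.
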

\begin{proof}
Any locally maximal product-free set $S$ of size $4$ must contain at least one reflection; otherwise $T(S) \cup \sqrt{S} \subseteq \langle S \rangle$ which is cyclic. Now, suppose $S=\{a_1,a_2,a_3,b\}$, where $a_1,a_2$ and $a_3$ are non-involutions, and $b$ is a reflection. As $|$Ref($T(S)\cup \sqrt{S}$)$|\leq |\{a_1b,a_2b,a_3b,$ $a_1^{-1}b,a_2^{-1}b,a_3^{-1}b\}|=6$, and a LMPFS of size $4$ cannot be contained in a group of order less than $8$, we must have that $8\leq |G|\leq 12$. By Propositions \ref{AH2015A_Pro} and \ref{AH2015A_P}, no such $S$ exists in $D_{8}$ and $D_{10}$ respectively. As no three non-involutions can form a locally maximal product-free set in $C_6$, no such $S$ exist in $D_{12}$.
\end{proof}

\noindent We are now in a position to classify all locally maximal product-free sets of size 4 in dihedral groups. 
\begin{thm}\label{R2}
Suppose $S$ is a LMPFS of size $4$ in a dihedral group $G$. Then up to automorphisms of $G$, the possible choices are given as follows:
\begin{center}
\begin{tabular}{|p{0.5cm}|p{10.5cm}|}\hline
$|G|$ &  $\qquad \qquad \qquad \quad S$\\ \hline
$8$ & $\{y,xy,x^2y,x^3y\}, \{x,x^3,y,x^2y\}$\\ \hline
$10$ & $\{x^2,x^3,y,x^4y\}$\\ \hline
$12$ & $\{x^3,y,xy,x^2y\}, \{x^2,x^3,y,x^5y\}, \{x,x^5,y,x^3y\}, \{x,x^4,y,x^3y\}$ \\ \hline
$14$ & $\{x^2,x^3,y,x^6y\}$\\ \hline
$16$ & $\{x^2,x^3,y,x^7y\}, \{x,x^6,y,x^4y\}$\\ \hline
$18$ & $\{x^2,x^5,y,x^8y\}$\\ \hline
$20$ & $\{x,x^8,y,x^5y\}$\\ \hline
\end{tabular}
\end{center}
\end{thm}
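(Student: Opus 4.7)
The plan is to assemble the classification from the preceding propositions and lemmas rather than prove anything substantively new. First I would apply Theorem \ref{CC1} with $|S|=4$, giving $|G|\leq 20$. I would combine this with a quick check that $D_4$ and $D_6$ admit no product-free set of size $4$ at all: in $D_6$ the only candidates for a $4$-subset of $D_6\setminus\{1\}$ are those containing at least one of $x,x^2$, and a direct inspection of products (for example $x\cdot y=xy$ or $x\cdot x=x^2$) rules each out. This reduces the classification to $|G|\in\{8,10,12,14,16,18,20\}$.

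For $|G|=8$, I would invoke Proposition \ref{AH2015A_Pro} directly: since $|S|=4=|G|/2$, the set $S$ is forced to be a non-trivial coset of a subgroup of index $2$ in $D_8$, and every such coset is a locally maximal product-free set. The three index-two subgroups $\langle x\rangle$, $\langle x^2,y\rangle$ and $\langle x^2,xy\rangle$ give cosets $\{y,xy,x^2y,x^3y\}$, $\{x,x^3,xy,x^3y\}$ and $\{x,x^3,y,x^2y\}$ respectively. The automorphism $x\mapsto x,\ y\mapsto xy$ of $D_8$ identifies the second and third, leaving exactly the two representatives listed for $|G|=8$.

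For the remaining orders $10\leq |G|\leq 20$, my plan is to stratify $S$ by the number of involutions it contains and invoke the earlier results. Lemma \ref{L4} eliminates the case where $S$ has at most one involution. Proposition \ref{P1} handles the four-involution case, forcing $G=D_{12}$ and $S$ automorphic to $\{x^3,y,xy,x^2y\}$. Lemma \ref{L2} handles the three-involution case, again forcing $G=D_{12}$ and $S$ automorphic to $\{x^2,x^3,y,x^5y\}$. Finally Proposition \ref{P3} enumerates the two-involution case and produces exactly the remaining rows of the table. On the construction side, each listed set has already been verified to be product-free and locally maximal (via Lemma \ref{GH2009L}) inside the proofs of those propositions.

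Since the heavy lifting has all been done upstream, there is no genuine obstacle left in this theorem; the only thing to watch is that the involution-count stratification is exhaustive and matches exactly the hypotheses of Lemma \ref{L4}, Proposition \ref{P1}, Lemma \ref{L2}, and Proposition \ref{P3}, and that the $|G|=8$ case, which falls outside the $10\leq|G|\leq 20$ range of those results, is cleanly handled by Proposition \ref{AH2015A_Pro}. Collecting the outputs produces precisely the table in the statement.
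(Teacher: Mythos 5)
Your proposal is correct and follows essentially the same route as the paper: the bound $8\leq|G|\leq 20$ from Theorem \ref{CC1}, the $D_8$ case via Proposition \ref{AH2015A_Pro}, and for $10\leq|G|\leq 20$ the exhaustive stratification by number of involutions handled by Proposition \ref{P1}, Lemma \ref{L2}, Proposition \ref{P3} and Lemma \ref{L4}. The extra details you supply (the lower-bound check for $|G|<8$ and the explicit identification of the two $D_8$ cosets up to automorphism) are accurate but not a departure from the paper's argument.
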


\begin{proof}
By Theorem \ref{AH2015A_T} and the fact that a locally maximal product-free set of size $4$ cannot be contained in a group of order less than $8$, we must have that $8\leq|G|\leq 20$. If $G=D_8$, then by Proposition \ref{AH2015A_Pro}, either $S=\{y,xy,x^2y,x^3y\}$ or it is mapped by an automorphism of $G$ into $\{x,x^3,y,x^2y\}$. Now, suppose $10\leq |G|\leq 20$. The result follows from Proposition \ref{P1}, Lemma \ref{L2}, Proposition \ref{P3} and Lemma \ref{L4}.
\end{proof}

\section{Filled dihedral groups}

In this section we obtain some facts about filled dihedral groups. 
In \cite{SW1974} the authors asserted that the dihedral group  of order $2n$ is not a filled group for $n=6k+1$. They went further to produce a locally maximal product-free set ($S:=\{x^{2k+1},\dots,x^{4k},x^{2k+1}y,\dots,x^{4k}y\}$) which they claim does not fill $D_{2n}$. However we have the following.

\begin{pro}\label{P5}
Let $G$ be a dihedral group of order $2n$ for $n=6k+1$ and $k\geq 1$. Then the set $S:=\{x^{2k+1},\dots,x^{4k},x^{2k+1}y,\dots,x^{4k}y\}$ is product-free but not locally maximal in $G$.
\end{pro}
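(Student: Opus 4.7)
The plan is to split $S = A \cup B$ with $A = \{x^{2k+1}, \ldots, x^{4k}\}$ and $B = Ay$, compute $SS$ to verify product-freeness, and then apply Lemma~\ref{GH2009L} to rule out local maximality by exhibiting elements of $G$ lying outside $T(S) \cup \sqrt{S}$.

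The first key observation I would make is that the interval $\{2k+1, \ldots, 4k\}$ is symmetric about $(6k+1)/2$ modulo $n = 6k+1$, so $A^{-1} = A$; since reflections are involutions, $B^{-1} = B$ as well, and hence $S^{-1} = S$ and $T(S) = S \cup SS$. Using $xy = yx^{-1}$, the set $SS$ decomposes into the four blocks $AA$, $AB$, $BA$, $BB$, whose exponent sets reduce to the sumset $\{i + j : i, j \in \{2k+1, \ldots, 4k\}\}$ (for $AA$ and $AB$) and the difference set $\{i - j\}$ (for $BB$ and $BA$), all taken modulo $6k+1$. A brief direct calculation shows these exponents lie in $\{0, 1, \ldots, 2k-1\} \cup \{4k+2, \ldots, 6k\}$, which is disjoint from $\{2k+1, \ldots, 4k\}$. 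This gives $SS \cap S = \emptyset$, proving $S$ is product-free.

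For the non-local-maximal half I would use Lemma~\ref{GH2009L} and identify the exponents missing from $T(S) \cup \sqrt{S}$. The calculation above shows that the exponents missing from both the rotation part and the reflection part of $T(S)$ are precisely $2k$ and $4k+1$. The two missing rotations $x^{2k}$ and $x^{4k+1}$ do lie in $\sqrt{S}$, since $(x^{2k})^2 = x^{4k} \in A$ and $(x^{4k+1})^2 = x^{8k+2} = x^{2k+1} \in A$, but the two missing reflections $x^{2k}y$ and $x^{4k+1}y$ cannot lie in $\sqrt{S}$ because their squares equal $1 \notin S$. Hence $G \neq T(S) \cup \sqrt{S}$, so $S$ is not locally maximal. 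If a concrete certificate is preferred, one can check directly that $S \cup \{x^{2k}y\}$ remains product-free by repeating the four block calculations with $x^{2k}y$ adjoined; this is the explicit extension that defeats the original Street--Whitehead claim.

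The only real obstacle is bookkeeping the reductions modulo $6k+1$ when $i+j$ exceeds $n$ or $i-j$ is negative. The symmetry $A = A^{-1}$ and the clean partition of output exponents into the two intervals $\{0, \ldots, 2k-1\}$ and $\{4k+2, \ldots, 6k\}$ keep the arithmetic transparent and simultaneously pinpoint the two reflections that escape $T(S) \cup \sqrt{S}$.
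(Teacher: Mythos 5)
Your proof is correct, but it takes a somewhat different route from the paper's. The paper handles both claims in one stroke by exhibiting the larger set $V = S \cup \{x^{2k}y\}$, computing $VV$ explicitly (using $V = V^{-1}$) and checking $V \cap VV = \varnothing$: product-freeness of $S$ then follows because $S \subseteq V$, and non-maximality because the containment is proper. You instead compute $SS$ directly and invoke Lemma~\ref{GH2009L}: since $S = S^{-1}$ you get $T(S) = S \cup SS$, whose rotation and reflection parts each miss exactly the exponents $2k$ and $4k+1$; the two missing rotations lie in $\sqrt{S}$ but the two missing reflections cannot, so $G \neq T(S) \cup \sqrt{S}$ and the ``only if'' direction of the lemma applies. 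Your arithmetic checks out ($A^{-1}=A$ because $i \mapsto 6k+1-i$ preserves the interval $[2k+1,4k]$, and both the sumset $[4k+2,8k]$ and the difference set $[-(2k-1),2k-1]$ reduce mod $6k+1$ to $\{0,\dots,2k-1\}\cup\{4k+2,\dots,6k\}$). Your route has the advantage of pinpointing exactly which elements witness the failure of local maximality, and indeed the two escaping reflections $x^{2k}y$ and $x^{4k+1}y$ are precisely the elements the paper adjoins to form its extensions $V$ and $U$. The paper's route is marginally more economical and its explicit computation of $VV$ is reused in Remark~\ref{R6} to conclude that $V$ fills $G$ and is itself locally maximal; your argument does not deliver that extra payoff by itself, though the optional certificate $S \cup \{x^{2k}y\}$ you mention at the end recovers it.
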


\begin{proof}
The fact that $S$ is product-free follows from our proof since every subset of a product-free set is product-free. So, we only show that $S$ is not locally maximal. To do this, we show that the set $V:=\{x^{2k+1},\dots,x^{4k},x^{2k}y,x^{2k+1}y,\dots,x^{4k}y\}$, which properly contains $S$, is product-free. (One may also do same using $U:=\{x^{2k+1},\dots,x^{4k},x^{2k+1}y,$ $\dots,x^{4k}y,x^{4k+1}y\}$.) Let $A = $ Rot($V$) and $B = $ Ref($V$). We note that $V = V^{-1}$ and so $BA = A^{-1}B = AB$. Therefore $VV = AA \cup AB \cup BA \cup BB = AA \cup BB \cup AB$. Thus $$VV = \{1,x,\cdots,x^{2k}\}\sqcup \{x^{4k+1},x^{4k+2},\cdots,x^{6k}\} \sqcup \{y,xy,\cdots,x^{2k-1}y\}\sqcup \{x^{4k+1}y,x^{4k+2}y,\cdots,x^{6k}y\}.$$ As $V\cap VV=\varnothing$, the set $V$ is product-free. 
\end{proof}

Incidentally, we note here that our Proposition \ref{MR3} shows that the list of locally maximal product-free sets of size $4$ in $D_{14}$ given in Table $1$ of \cite{SW1974} is not correct. In particular, the authors claimed that $S=\{a,ab,a^3b,a^6b\}$ is locally maximal. However, this is not true as $S$ is contained in a product-free set of size $5$; viz. $\{a,a^{-1},ab,a^3b,a^6b\}$.

\begin{rem}\label{R6}
Observe that $G=V \sqcup VV$ in the proof of Proposition \ref{P5} above. Thus, $V$ fills $G$. By Lemma \ref{GH2009L} therefore, $V$ is a locally maximal product-free subset of $D_{12n+2}$.
\end{rem}

\noindent We give (without proof) Proposition \ref{P7} and Lemma \ref{L8}, whose proofs are similar to those in Section~3. 

\begin{pro}\label{P7}
Up to automorphisms of $D_{14}$, the only locally maximal product free set of size $5$ in $D_{14}$ is $V$, where $V$ is as defined in the proof of Proposition \ref{P5}.
\end{pro}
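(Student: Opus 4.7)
The plan is to decompose any LMPFS $S$ of size $5$ as $S = A \cup B$ with $A := \mathrm{Rot}(S)$ and $B := \mathrm{Ref}(S)$, and to show that $(A, B)$ is rigid up to automorphism of $D_{14}$. By Remark \ref{RSS}, $B \neq \varnothing$, and $A \neq \varnothing$ too, since otherwise $S$ would be a proper subset of the size-$7$ product-free coset $\mathrm{Ref}(G)$, contradicting local maximality. Moreover $A \subseteq \langle x\rangle \cong \mathbb{Z}/7$ must itself be product-free, so Cauchy--Davenport ($|A+A|\geq 2|A|-1$) combined with $|A\cup(A+A)| \leq 7$ forces $|A| \leq 2$. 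Hence $(|A|, |B|) \in \{(1,4), (2,3)\}$.

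The main input is the reflection-covering identity $\mathrm{Ref}(G) = B \cup (A \cup A^{-1}) B$ from the proof of Theorem \ref{CC1}. Since product-freeness gives $B \cap (A \cup A^{-1})B = \varnothing$, we obtain $|(A \cup A^{-1})B| = 7 - |B|$. If $|A|=1$, then $|(A \cup A^{-1})B| \geq |B| = 4 > 3$, a contradiction. If $|A| = 2$ and $A \neq A^{-1}$, then $|A \cup A^{-1}| = 4$, and Cauchy--Davenport gives $|(A \cup A^{-1})B| \geq \min(7, 6) = 6 > 4$, again a contradiction. The only remaining case is $A = \{x^i, x^{-i}\}$ for some $i \neq 0$, with $|B| = 3$.

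Writing $J \subseteq \mathbb{Z}/7$ for the reflection indices of $B$, the condition $|\{i,-i\} + J| = 4$ is the equality case of Cauchy--Davenport. Rewriting it as $|(i+J) \cap (-i+J)| = 2$ means exactly two elements $j \in J$ satisfy $j + 2i \in J$; a short case-check on the six ordered differences in a $3$-element subset of $\mathbb{Z}/7$ shows this forces $J$ to be a $3$-term arithmetic progression with common difference $\pm 2i$, i.e.\ $J = \{j_0, j_0 + 2i, j_0 + 4i\}$ for some $j_0$. A direct check then confirms such $S$ is product-free (the differences within $J$ are $\{0, \pm 2i, \pm 4i\}$, disjoint from $\{\pm i\}$) and satisfies $T(S) = G$, so is locally maximal by Lemma \ref{GH2009L}.

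It remains to reduce to $V$ via the automorphisms $\psi_l : x \mapsto x,\ y \mapsto x^l y$ and $\phi_k : x \mapsto x^k$ (with $\gcd(k, 7) = 1$). Applying $\psi_{-j_0}$ normalizes $j_0 = 0$; then $\phi_k$ with $ki \equiv 3 \pmod 7$ normalizes $i = 3$, yielding $(A, B) = (\{x^3, x^4\}, \{y, x^5y, x^6y\})$; a final $\psi_4$ shifts the reflections to $\{x^2y, x^3y, x^4y\}$ and recovers $V$. The main obstacle is the AP-structure step --- effectively the equality case of Cauchy--Davenport (Vosper's theorem) --- which for our small parameters reduces to a concrete count in $\mathbb{Z}/7$. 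Local maximality of $V$ itself is already furnished by Remark \ref{R6}.
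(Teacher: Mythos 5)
Your proof is correct. Note that the paper does not actually print a proof of Proposition \ref{P7}: it states only that the argument is ``similar to those in Section 3'', i.e.\ an exhaustive case analysis on the number and orders of the rotations and reflections in $S$, in the style of Propositions \ref{P1}--\ref{P3}. Your argument keeps the paper's skeleton --- split $S$ into $A=\mathrm{Rot}(S)$ and $B=\mathrm{Ref}(S)$, use Remark \ref{RSS} and the covering identity \eqref{eq2} --- but replaces the enumeration by additive combinatorics in $\mathbb{Z}/7\mathbb{Z}$: Cauchy--Davenport first forces $|A|\le 2$, then the exact count $|(A\cup A^{-1})B|=7-|B|$ eliminates $(|A|,|B|)=(1,4)$ and the case $A\neq A^{-1}$, and finally the equality case $|(i+J)\cap(-i+J)|=2$ forces the reflection indices $J$ to form a $3$-term arithmetic progression of common difference $\pm 2i$. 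I checked the delicate points: among the six ordered differences $u,v,u+v,-u,-v,-(u+v)$ of a $3$-subset of $\mathbb{Z}/7\mathbb{Z}$, every coincidence that does not degenerate (i.e.\ does not force $u=0$, $v=0$, $u+v=0$ or $3d=0$) is an AP configuration, so your structural claim holds; the resulting sets are indeed product-free and satisfy $T(S)=G$; and the orbit computation under $x\mapsto x^k$, $y\mapsto x^ly$ does land on $V=\{x^3,x^4,x^2y,x^3y,x^4y\}$. What your route buys is a transparent, checkable derivation that would adapt to $D_{2p}$ for other primes $p$; what the paper's (omitted) route buys is uniformity with the rest of Section 3 and no appeal to Cauchy--Davenport, which for the single group $D_{14}$ could anyway be replaced by direct inspection of the $\binom{7}{2}$ and $\binom{7}{3}$ possibilities.
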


\begin{lem}\label{L8}
There is no locally maximal product-free set of size $6$ in $D_{14}$.
\end{lem}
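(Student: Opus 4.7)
The plan is to suppose, for contradiction, that $S$ is a locally maximal product-free set of size $6$ in $D_{14}$ and derive a contradiction by analysing its rotation/reflection split. First I would set $A = \mathrm{Rot}(S)$ with $|A| = a$ and $B = \mathrm{Ref}(S)$ with $|B| = b$, so $a+b = 6$. By Remark \ref{RSS}, $b \geq 1$. On the other hand $b \leq 5$: otherwise $S \subseteq \mathrm{Ref}(D_{14})$, but $\mathrm{Ref}(D_{14})$ is the nontrivial coset of $\langle x\rangle$ and hence a product-free set of size $7$ by Lemma \ref{lemma1.2}, violating local maximality of $S$. So $1 \leq a \leq 5$ as well.

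The next step is to show $a \leq 2$. Since $S$ is product-free in $D_{14}$ and $AA \subseteq \langle x\rangle \cong C_7$, the set $A$, regarded as a subset of $\mathbb{Z}_7$, is sum-free. Cauchy--Davenport gives $|A+A| \geq \min(2|A|-1, 7)$, and combined with $(A+A)\cap A = \emptyset$ this yields $3|A| \leq 8$, i.e.\ $|A| \leq 2$. This leaves only the two cases $(a,b) \in \{(1,5),(2,4)\}$.

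Both remaining cases can then be disposed of by the same difference-set argument. Writing $B = \{x^{r_1}y,\ldots,x^{r_b}y\}$ and $R = \{r_1,\ldots,r_b\} \subseteq \mathbb{Z}_7$, the dihedral relation $(x^{r_k}y)(x^{r_l}y) = x^{r_k-r_l}$ identifies $BB$ with the exponent set $R-R \subseteq \mathbb{Z}_7$. Product-freeness of $S$ forces $A \cap BB = \emptyset$, so $A$, viewed as a subset of $\mathbb{Z}_7$, is disjoint from $R-R$. However $|R| = b \in \{4,5\}$ satisfies $|R| + |R+g| > 7$ for every $g \in \mathbb{Z}_7$, so pigeonhole gives $R \cap (R+g) \neq \emptyset$, whence $g \in R-R$. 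Thus $R-R = \mathbb{Z}_7$, forcing $A = \emptyset$ and contradicting $a \geq 1$.

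The main obstacle I expect is the bound $a \leq 2$; once the rotation count is constrained, the pigeonhole finish is essentially immediate and handles both cases uniformly. An alternative route would be to enumerate product-free $A \subseteq C_7$ up to the automorphism $x \mapsto x^k$ of $D_{14}$ (where only the configuration $\{x, x^3\}$ survives up to equivalence) and then analyse the two possible reflection sets $B$ directly, but the difference-set approach avoids this case analysis entirely.
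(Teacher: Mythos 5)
Your proof is correct. The paper in fact omits the proof of this lemma entirely, remarking only that it is ``similar to those in Section 3''; those arguments are ad hoc case analyses that enumerate the possible rotation/reflection splits of $S$ and then compare counts of $\mathrm{Rot}(T(S)\cup\sqrt S)$ or $\mathrm{Ref}(S\sqcup SS)$ against $|\mathrm{Rot}(G)|$ and $|\mathrm{Ref}(G)|=7$. Your route reaches the same two surviving splits $(a,b)\in\{(1,5),(2,4)\}$ by a cleaner mechanism: Cauchy--Davenport shows that a sum-free subset of $\mathbb{Z}_7$ has at most two elements, giving $a\le 2$ (with Remark \ref{RSS} giving $b\ge 1$ and the containment $S\subseteq M_1$ ruling out $a=0$), and then the pigeonhole observation that $|R|\ge 4$ forces $R-R=\mathbb{Z}_7$ disposes of both remaining cases simultaneously, since product-freeness forces $A\cap(R-R)=\emptyset$ while local maximality forces $A\neq\emptyset$. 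This buys a uniform, computation-free argument --- indeed it proves the slightly stronger fact that $D_{14}$ has no product-free set of size $6$ containing both a rotation and a reflection --- whereas the paper's implied method would require checking the candidate sets individually. One small quibble with your closing aside only: up to the automorphisms $x\mapsto x^k$ there are two inequivalent product-free $2$-subsets of $\langle x\rangle$ in $D_{14}$, namely $\{x,x^{-1}\}$ and $\{x,x^3\}$, not one; but since that enumeration plays no role in your actual argument, the proof stands.
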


\noindent The following result together with Proposition \ref{P5} disprove the stated conjecture.
\begin{thm}\label{T9}
$D_{14}$ is a filled group.
\end{thm}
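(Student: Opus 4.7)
The plan is to show that the only LMPFS in $D_{14}$, up to automorphism, are already known from the preceding classification results, and then to verify the filling property in each remaining case.

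First, I would bound $|S|$. By Theorem \ref{CC1}, any LMPFS $S$ in $D_{14}$ satisfies $14 \le |S|^2 + |S|$, forcing $|S| \ge 4$. At the other end, any product-free set has size at most $|G|/2 = 7$, and Proposition \ref{AH2015A_Pro} handles the case $|S| = 7$ immediately: such $S$ is the nontrivial coset of an index-$2$ subgroup and so fills $G$. Thus only $|S| \in \{4,5,6\}$ remain.

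Next, I would dispose of each remaining size using the work from Section 3 and Proposition \ref{P5}. For $|S| = 6$, Lemma \ref{L8} tells us that no such LMPFS exists. For $|S| = 5$, Proposition \ref{P7} says that up to automorphism $S$ must equal the set $V = \{x^3, x^4, x^2 y, x^3 y, x^4 y\}$ from the $k = 1$ case of Proposition \ref{P5}, and Remark \ref{R6} already records that $V$ fills $D_{14}$. Since the filling property is preserved by automorphisms, every LMPFS of size $5$ fills $G$.

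The only remaining case is $|S| = 4$. By Proposition \ref{AH2015A_P} and Corollary \ref{AH2015A_C}, every LMPFS of size $4$ in $D_{14}$ is automorphic to $S = \{x^2, x^3, y, x^6 y\}$. Here the work reduces to a direct finite computation: expand $SS$ and check that $S \cup SS = G$. Using $y x^i = x^{-i} y$ one obtains the rotations $\{1, x, x^4, x^5, x^6\} \subseteq SS$ (from $y \cdot x^6 y$, $y^2$, the products $x^i x^j$, and $x^6 y \cdot y$) and the reflections $\{xy, x^2 y, x^3 y, x^4 y, x^5 y\} \subseteq SS$ (from the cross products of rotations with reflections). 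Together with $S$ itself, this exhausts $G^\ast$, so $S$ fills $G$.

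The main obstacle is essentially absent: all the genuine classification labour was carried out in Section 3 and in Proposition \ref{P7}/Lemma \ref{L8}. The only item requiring new work here is the one-off verification that the size-$4$ set $\{x^2, x^3, y, x^6 y\}$ fills $D_{14}$, and this is a short, mechanical calculation.
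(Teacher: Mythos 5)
Your proposal is correct and follows essentially the same route as the paper's proof: bound $|S|$ between $4$ and $7$, dispose of sizes $6$ and $7$ via Lemma \ref{L8} and Proposition \ref{AH2015A_Pro}, handle size $5$ via Proposition \ref{P7} and Remark \ref{R6}, and verify directly that the unique size-$4$ set $\{x^2,x^3,y,x^6y\}$ fills $D_{14}$ (your computation of $SS$ agrees with the paper's). The only cosmetic difference is that you cite Corollary \ref{AH2015A_C} where the paper cites Theorem \ref{R2} for the size-$4$ classification; both give the same set.
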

\begin{proof}
From our discussion in Section~3, if $S$ is a locally maximal product-free set in $D_{14}$, then $4\leq |S|\leq 7$. If $|S|=4$, then by Theorem \ref{R2}, $S$ is mapped by an automorphism of $D_{14}$ into $W:=\{x^2,x^3,y,x^6y\}$. As $WW=\{1,x,x^4,x^5,x^6,xy,x^2y,x^3y,x^4y,x^5y\}$, the set $W$ fills $D_{14}$. By Remark \ref{R6} and Proposition \ref{P7}, any locally maximal product-free set of size $5$ in $D_{14}$ fills the group. By Lemma \ref{L8}, there is no locally maximal product-free set of size $6$ in $D_{14}$. By Proposition \ref{AH2015A_Pro}, the only locally maximal product-free set of size $7$ in $D_{14}$ is $M_1$, which by definition, fills $D_{14}$. Since every locally maximal product-free set in $D_{14}$ fills $D_{14}$, therefore $D_{14}$ is a filled group.
\end{proof}

\noindent The disproved conjecture of Street and Whitehead left us with no other known example of a dihedral group which is not a filled group. We show that such examples exist as follows:

\begin{thm}\label{last}
If $S$ is a locally maximal product-free set of size $k\geq 3$ in a finite dihedral group $G$ of order $k(k+1)$, then $S$ does not fill $G$.
\end{thm}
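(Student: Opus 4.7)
The plan is to use the observation that $|G| = k(k+1) = |S|^2 + |S|$ means the bound in Theorem~\ref{CC1} is attained, which rigidly determines $|\mathrm{Rot}(S)|$ and $|\mathrm{Ref}(S)|$, and then to show via a counting argument that even in this extremal configuration the rotation subgroup of $G$ cannot be covered by $S \cup SS$.

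First I would revisit the proof of Theorem~\ref{CC1}. With $A = \mathrm{Rot}(S)$ and $B = \mathrm{Ref}(S)$, the bound $|G| \leq 2(2|A|+1)(|S|-|A|)$ is quadratic in $|A|$ with maximum value $|S|(|S|+1)$ attained only at the near-midpoint integer. Equality with $|G| = k(k+1)$ therefore forces $|A| = k/2$, $|B| = k/2$ when $k$ is even, and $|A| = (k-1)/2$, $|B| = (k+1)/2$ when $k$ is odd.

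Next, I will suppose for contradiction that $S$ fills $G$. Using $BA = A^{-1}B$, the rotations appearing in $S \cup SS$ form precisely $A \cup AA \cup BB$, and since $1 \in BB$ (because every element of $B$ is an involution), the filling condition forces
\[
|A \cup AA \cup BB| \;\geq\; |\mathrm{Rot}(G)| \;=\; \tfrac{k(k+1)}{2}.
\]
The key refinement is a sharper bound on $|BB|$: the $|B|$ pairs $(b,b)$ all contribute the identity, so $|BB| \leq 1 + |B|(|B|-1)$, not merely $|B|^2$. Combined with $|AA| \leq |A|^2$ and the union bound, this gives
\[
|A \cup AA \cup BB| \;\leq\; |A| + |A|^2 + |B|(|B|-1) + 1.
\]
A one-line substitution of the values of $|A|$ and $|B|$ produces $k^2/2 + 1$ in the even case and $(k^2+1)/2$ in the odd case; both are strictly less than $k(k+1)/2$ whenever $k \geq 3$, contradicting the displayed inequality and proving that $S$ does not fill $G$.

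The main obstacle is recognising that the involution structure of $B$ is essential: if one uses the naive bound $|BB| \leq |B|^2$, the counting inequalities coincide exactly, so the extra $|B|-1$ saved by exploiting $b^2 = 1$ for $b \in B$ is precisely what drives the contradiction. Extracting the exact values of $|A|$ and $|B|$ from equality in Theorem~\ref{CC1} is the other non-mechanical step, but it is essentially a transcription of the quadratic maximisation already carried out in the proof of that theorem.
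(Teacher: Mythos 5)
Your proof is correct, but it takes a genuinely different route from the paper's. The paper does not analyse the extremal case of Theorem \ref{CC1} at all: it observes that filling would force $|S\sqcup SS|=|G|=k^2+k$, hence $|SS|=k^2$, i.e.\ all $k^2$ ordered products of elements of $S$ would have to be distinct; but since $k\geq 3$, the set $S$ contains either two reflections (whose squares collide at the identity) or two commuting rotations (so the two ordered products collide), whence $|SS|<k^2$ and the count fails. Your argument instead extracts the exact split $|A|=|\mathrm{Rot}(S)|$, $|B|=|\mathrm{Ref}(S)|$ forced by equality in the reflection-counting bound inside the proof of Theorem \ref{CC1}, and then shows that the rotation subgroup alone cannot be covered by $A\cup AA\cup BB$, using the refined bound $|BB|\leq 1+|B|(|B|-1)$. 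Both arguments are sound: your identification of the maximising integer value of $|A|$ in each parity case is right, your arithmetic gives $k^2/2+1$ and $(k^2+1)/2$ as claimed, and you correctly observe that the naive bound $|BB|\leq |B|^2$ would make the inequalities coincide, so the involution refinement is genuinely needed. You also handle explicitly (via $1\in BB$) the small point, left implicit in the paper, that the identity lies in $S\sqcup SS$, so that the cardinality deficit really falls on $G^{*}$. The trade-off: the paper's proof is shorter and needs only $|SS|\leq |S|^2$ plus a pigeonhole collision, while yours is longer but yields extra structural information --- the forced values of $|\mathrm{Rot}(S)|$ and $|\mathrm{Ref}(S)|$ in the extremal configuration --- which the paper's proof never determines.
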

\begin{proof}
Suppose $S$ is a locally maximal product-free set of size $k\geq 3$ in a finite dihedral group $G$ of order $k(k+1)$. As $|SS|\leq k^2$, and $S\cap SS = \varnothing$, the set $S\sqcup SS$ fills $G$ if and only if $|S\sqcup SS|=k^2+k$. As $|S|\geq 3$, either $S$ contains two rotations or two reflections. If $S$ contains two reflections $b_1$ and $b_2$, then as $b_1^2=b_2^2=1$, we must have that $|SS|<k^2$. On the other hand, if $S$ contains two rotations $a_1$ and $a_2$, then as $a_1a_2=a_2a_1$, we must have that $|SS|<k^2$. In either case, $|S\sqcup SS|<k^2+k$; so $S$ does not fill $G$.
\end{proof}

 An example of the construction given in Theorem \ref{last} exists in $D_{20}$ as $S=\{x,x^8,y,x^5y\}$ is locally maximal in $D_{20}$ but does not fill the group. Thus, not every dihedral group is a filled group.

\begin{rem}
\noindent Street and Whitehead in \cite{SW1974} and \cite{SW1974A} pointed out that any dihedral group of order less than $14$ is a filled group. We have shown that $D_{14}$ is also filled. In fact, the first example of a non-filled dihedral group is $D_{16}$. By Theorem \ref{R2}, the set $Y:=\{x,x^6,y,x^4y\}$ is locally maximal in $D_{16}$. However, $|Rot(Y\sqcup YY)|=6<8$.
\end{rem}

We  make the following more general observation.

\begin{pro}\label{dih1} If $8$ divides $n$, then $D_{2n}$ is not filled. In particular, the only filled dihedral 2-groups are $D_4$ and $D_8$. 
\end{pro}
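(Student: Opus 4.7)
The plan is to derive the main claim as a direct consequence of the preceding Remark, which exhibits $Y = \{x,x^6,y,x^4y\}$ as a locally maximal product-free set in $D_{16}$ that fails to fill $D_{16}$. Given this, I only need a quotient map from $D_{2n}$ onto $D_{16}$ in order to invoke the contrapositive of Lemma \ref{swl1}.

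Concretely, for $G = D_{2n}$ with $n = 8m$, I would set $N = \langle x^8\rangle$. The relation $y x^8 y^{-1} = x^{-8} \in N$ shows that $N$ is normal in $G$, while $|\langle x\rangle / N| = 8$ and the coset $yN$ retains order $2$, still satisfying $(yN)(xN)(yN)^{-1} = (xN)^{-1}$. Hence $G/N \cong D_{16}$, and the contrapositive of Lemma \ref{swl1} gives that $G$ is not filled, since $D_{16}$ is not.

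For the \emph{in particular} clause, every dihedral $2$-group is $D_{2n}$ with $n$ a power of $2$. The group $D_4 \cong C_2 \times C_2$ is an elementary abelian $2$-group and is therefore filled by the theorem of Street and Whitehead. For $D_8$ I would enumerate its locally maximal product-free sets using Theorems \ref{R1} and \ref{R2} together with Proposition \ref{AH2015A_Pro}, and then verify by direct computation that each satisfies $S \cup SS = G^{\ast}$; this reduces to a short finite check. For $n = 2^k$ with $k \geq 3$, the divisibility $8 \mid n$ holds, so the first part of the proposition already rules out filledness. The only step of the argument that requires explicit case-work rather than general principles is the verification that $D_8$ is filled, and this presents no genuine obstacle because the list of candidates is very short.
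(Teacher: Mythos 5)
Your proof is correct and follows essentially the same route as the paper: quotienting $D_{2n}$ by the normal subgroup $\langle x^8\rangle$ to obtain $D_{16}$, then applying the contrapositive of Lemma \ref{swl1} together with the fact that $D_{16}$ is not filled. The finite verification you outline for $D_4$ and $D_8$ is the standard short check the paper leaves implicit (citing Street and Whitehead's observation that dihedral groups of order less than 14 are filled), so nothing essential is missing.
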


\begin{proof} Suppose $8$ divides $n$. Let $H = \langle x^{8}\rangle $. Then $H$ is a normal subgroup of $D_{2n}$ whose quotient is dihedral of order 16. By Lemma \ref{swl1}, and the fact that $D_{16}$ is not filled, we see that $D_{2n}$ is not filled.  
\end{proof}

We finish this paper with a table giving the known filled groups of order up to 31.

\begin{table}[h!]
\begin{center} \begin{tabular}{c|c} Order & Groups\\
\hline 2 & $C_2$\\
3 & $C_3$\\
4 & $C_2 \times C_2$\\
5 & $C_5$\\
6 & $D_6$\\
8 & $C_2^3$, $D_8$\\
10 & $D_{10}$\\
12 & $D_{12}$\\
14 & $D_{14}$\\
16 & $C_2^4$, $D_8 \times C_2$\\
22 & $D_{22}$
\end{tabular}
\caption{Filled Groups of Order less than 32}\label{table1}\end{center}
\end{table}

Table \ref{table1} was calculated using GAP, along with the results obtained in this paper. For example we only needed to check nonabelian groups of even order which have no normal subgroups of index 3. We can see that the known nonabelian filled groups are either 2-groups or dihedral (or both). The same reasoning as in Proposition \ref{dih1}, applied to the dihedral groups in Table \ref{table1}  implies that $D_{2n}$ is not filled if $n$ is divisible by $8, 9, 10, 12, 13, 14$ or $15$, but we are not able to fully classify the filled dihedral groups.

\end{document}